\newtheorem{theorem}{Theorem}[section]
\newtheorem{proposition}[theorem]{Proposition}
\theoremstyle{definition}
\newtheorem{definition} [theorem]{Definition}
\theoremstyle{remark}
\numberwithin{equation}{section}
\begin{document}

\title[Directional Stockwell transform of distributions]{Directional Stockwell transform of distributions}

\author[A. Ferizi]{Astrit Ferizi}
\address{Faculty of Mathematics and Natural Sciences, University of Prishtina, George Bush 31, Prishtina, 10000, Kosovo}
\email{ferizi.astrit@gmail.com}

\author[K. Saneva]{Katerina Hadzi-Velkova Saneva}
\address{Faculty of Electrical Engineering and Information Technologies, Ss. Cyril and Methodius University in Skopje, Rugjer Boshkovikj 18, Skopje, 1000, Republic of North Macedonia}
\email {saneva@feit.ukim.edu.mk}

\keywords{Stockwell transform, Radon transform, directional Stockwell transform, distributions.}

\begin{abstract}
We introduce and study the directional Stockwell transform as a hybrid of the directional short-time Fourier transform and the ridgelet transform. We prove an extended Parseval identity and a reconstruction formula for this transform, as well as results for the continuity of both the directional Stockwell transform and its synthesis transform on the appropriate space of test functions. Additionally, we develop a distributional framework for the directional Stockwell transform on the Lizorkin space of distributions $\mathcal{S}_{0}'(\mathbb{R}^n)$.
\end{abstract}

\maketitle

\section{Introduction}

The one-dimensional wavelet transform has proven to be a very efficient tool for detecting the point singularity of a function. However, it does not perform as well if the singularity is well distributed, such as singularities along curves, along hyperplanes, etc. \cite{Can99}. One of the developed techniques for dealing with higher-dimensional phenomena is ridgelet analysis. 
The idea of the ridgelet transform is to use the Radon transform to project a hyperplane singularity into a point singularity and then act with the one-dimensional wavelet transform. It was introduced by Cand\`{e}s in \cite{CanPhd, Can999} who constructed from a Schwartz function $\psi$, a family of building blocks, called  ridgelets,
$$\textbf{x} \to \frac{1}{a} \psi \left( \frac{\textbf{u} \cdot \textbf{x}-b}{a} \right),  \enspace \textbf{x}\in \mathbb{R}^n,$$
where $\textbf{u}\in \mathbb{S}^{n-1}$ is the orientation parameter ($\mathbb{S}^{n-1}$ is the unit sphere of $\mathbb{R}^n$), $b \in \mathbb{R}$  is the location
parameter and $a \in \mathbb{R}^{+}$  is the scale parameter. Then, he used them to define the ridgelet transform of $f\in L^1(\mathbb{R}^n)$,
\begin{equation}\label{RT}{\mathcal R}_{\psi} f (\textbf{u},b,a)=  \int_{\mathbb{R}^n}f(\textbf{x}) \overline{\psi}\left( \frac{\textbf{u} \cdot \textbf{x}-b}{a} \right) d\textbf{x}.\end{equation}
The ridgelet transform is extended and studied on the space of Lizorkin distributions $\mathcal{S}'_0(\mathbb{R}^n)$ in \cite{Kos13}. Since the ridgelets are not from the class $\mathcal{S}_0(\mathbb{R}^n)$, the ridgelet transform of Lizorkin distributions can not be evaluated directly as an act of distribution on the ridgelets. Therefore the authors of \cite{Kos13} defined the ridgelet transform on $\mathcal{S}_0(\mathbb{R}^n)$  via a duality approach. They also provided a relationship between the ridgelet, Radon, and  wavelet transforms.

Similarly to the ridgelet analysis,  Grafakos and Sansing provided an idea to localize information in time, frequency and direction. They constructed the Gabor ridge functions of a Schwartz function $\psi$,
$$\textbf{x} \to \psi(\textbf{u} \cdot \textbf{x}-b)e^{ia(\textbf{x} \cdot \textbf{u})},  \enspace \textbf{x}\in \mathbb{R}^n,$$
where $\textbf{u}\in \mathbb{S}^{n-1}$ and $a,b \in \mathbb{R}$ \cite{Gra08} and used them to define a directional sensitive variant of the short-time Fourier transform of $f\in L^1(\mathbb{R}^n)$,
\begin{equation}\label{DSTFT} \frac{1}{\sqrt {2\pi}}\int_{\mathbb{R}^n}f(\textbf{x}) \overline{\psi}(\textbf{u} \cdot \textbf{x}-b)e^{-ia(\textbf{x} \cdot \textbf{u})} d\textbf{x}.\end{equation} Their results
for directionally sensitive time–frequency decompositions in $L^2(\mathbb{R}^n)$ based on Gabor
systems in $L^2(\mathbb{R})$ are generalized in \cite{Christensen}, by showing similar results for general frames for $L^2(\mathbb{R})$, both in the setting of discrete and continuous frames.

The Stockwell transform was introduced by Stockwell in \cite{Sto96} and combines the best features of the short-time Fourier transform and the wavelet transform. It can be seen as the phase correction of the wavelet transform  \cite{Sto2007}. A generalized version of the Stockwell transform was given by Du et al. in \cite{Du07}, whereas Riba and Wong introduced and studied the multi-dimensional Stockwell transform \cite{Riba14}. We point out that the Stockwell transform is applied in various fields, including  the theory of distributions (see, e.g., \cite{Atanasova, San20}), signal and image processing, medicine, mechanical engineering, geoscience (see, e.g., \cite{Wei, Kumar} and references therein).

Given the limitations of the Stockwell analysis  within the time-frequency plane, we introduce the directional sensitive variant of the Stockwell transform, motivated by the ridgelet transform (\ref{RT})  and the transform (\ref{DSTFT}) defined by Grafakos and Sansing. This transform, called the directional Stockwell transform, aims to provide a more comprehensive representation of signals with directional features or anisotropic behaviors. 
We prove an extended Parseval identity, a reconstruction formula, and the continuity of the directional Stockwell transform and its transpose, called here the directional Stockwell synthesis operator, respectively, on the spaces $\mathcal{S}_{0}(\mathbb{R}^n)$ and $\mathcal{S}(\mathbb{Y}^{n+1})$, where $\mathbb{Y}^{n+1}=\mathbb{S}^{n-1} \times \mathbb{R} \times (\mathbb{R} \setminus \lbrace 0 \rbrace)$. 

We then use our results to develop a distributional framework for the directional Stockwell transform. In Section 5, following the duality approach, we define the directional Stockwell transform and its correspondent synthesis operator on $\mathcal{S}'_{0}(\mathbb{R}^n)$ and $\mathcal{S}'(\mathbb{Y}^{n+1})$, respectively. Here $\mathcal{S}'(\mathbb{Y}^{n+1})$ is a certain space of distributions of slow growth on $\mathbb{Y}^{n+1}$ and $\mathcal{S}'_{0}(\mathbb{R}^n)$ stands for the Lizorkin
distribution space (cf. Subsection 2.1). We emphasize that the Lizorkin spaces play a key role in Holschneider's approach to the wavelet transform of distributions \cite{Hol95}, as well as in the development of the distributional theory for the ridgelet and Stockwell transforms  \cite{Kos13, San20}. Many important
Schwartz distribution spaces, such as $\mathcal{E}'(\mathbb{R}^n)$, $\mathcal{O}'_{C}(\mathbb{R}^n)$, $L^p(\mathbb{R}^n)$, or the $\mathcal{D}'_{L^p}(\mathbb{R}^n)$ spaces, are embedded into $\mathcal{S}'_{0}(\mathbb{R}^n)$.

Unlike the Stockwell transform of a distribution \cite{San20}, the directional Stockwell transform can not be defined by a direct evaluation of the distribution at a certain family of test functions. The larger distribution space where the direct approach works is $D_{L^1}'(\mathbb{R}^n)$ (see Section 6 below).

%%%%%%%%%%%%%%%%%%%%%%%%%%%%%%%%%%%%%%%%%%%%%%%%%%%%%%%%%%%%%%
\section{Preliminaries}
\subsection{Notations and spaces}
 
In this section we give some notations and spaces that are used throughout this paper. For $\textbf{x}=(x_1,x_2,...,x_n), \textbf{y}=(y_1,y_2,...,y_n) \in \mathbb{R}^n$ and $ \alpha=(\alpha_1,\alpha_2,...,\alpha_n) \in \mathbb{N}_{0}^{n},$ we use standard notations of \textit{n}-dimensional calculus,
$\textbf{x}^{\alpha}=x_{1}^{\alpha_1}x_{2}^{\alpha_2}\cdots x_{n}^{\alpha_n}$, 
$\partial_{\textbf{x}}^{\alpha}=\partial^{\alpha_1}_{x_1}\partial^{\alpha_2}_{x_2}\cdots \partial^{\alpha_n}_{x_n}=\frac{\partial^{|\alpha|}}{\partial x_{1}^{\alpha_1} \partial x_{2}^{\alpha_2} \cdots \partial x_{n}^{\alpha_n}}$, $|\alpha|=\alpha_1+\alpha_2+...+\alpha_n$, $\vert \textbf{x} \vert$ denotes the Euclidean norm and $\textbf{x}\cdot \textbf{y}= x_1 y_1+x_2 y_2+\cdots x_n y_n$ the scalar product of $\textbf{x}$ and $\textbf{y}$. We write $A\lesssim B$ when $A \leq C \cdot B $ for some positive constant $C$. The Fourier transform $\mathcal{F}$ of a function $f \in L^{1}(\mathbb{R}^n)$ is defined as 
$\mathcal{F}f(\boldsymbol{\xi})=\widehat{f}(\boldsymbol{\xi})=\int_{\mathbb{R}^n}f(\textbf{x})e^{-i\textbf{x}\cdot \boldsymbol{\xi}}d\textbf{x}, \enspace   \boldsymbol{\xi} \in \mathbb{R}^n,$
and it is extended to $ L^2(\mathbb{R}^n)$ as usual \cite{Hor83}. 

We use the notation $\left( f, \varphi \right)_{L^2}$ for the $L^2$ inner product of $f$ and $\varphi$ and $\langle f, \varphi \rangle$ for the dual pairing between a distribution $f$ and a test function $\varphi$.
All dual spaces in the paper are equipped with the strong dual topology \cite{Tre67}.

The Schwartz space  $\mathcal{S}(\mathbb{R}^n)$ consists of all functions $\varphi\in C^{\infty}(\mathbb{R}^n)$ such that 
\begin{equation} \label{1}
\rho_m(\varphi)=\sup_{\textbf{x}\in \mathbb{R}^n, \,  |\alpha| \leq m } (1+|\textbf{x}|)^{m} | \partial^{\alpha}_\textbf{x}\varphi(\textbf{x}) | <\infty,
\end{equation}
for all $m\in \mathbb{N}_{0}$ \cite{Sch66, Hor83}. It is topologized by means of seminorms (\ref{1}). Its dual $\mathcal{S}^{'}(\mathbb{R}^n)$ is the space of tempered distributions. The Lizorkin space of test functions $\mathcal{S}_0(\mathbb{R}^n)$ consists of all $\varphi \in  \mathcal{S}(\mathbb{R}^n)$ such that $\int_{\mathbb{R}^n} \textbf{x}^m \varphi(\textbf{x})  d\textbf{x}=0$, for all $m\in \mathbb{N}_0^{n}$. It is provided with the topology inherited from $\mathcal{S}(\mathbb{R}^n)$ and is of crucial importance for our analysis. Its dual space $\mathcal{S}_0'(\mathbb{R}^n)$ is known as the space of Lizorkin distributions and it is canonically isomorphic to the quotient of $\mathcal{S}'(\mathbb{R}^n)$ by the space of polynomials \cite{Hol95}. 

 The space $D_{L^p}(\mathbb{R}^n)$, $1 \leq p \leq \infty$, consists of all smooth functions $\varphi$ such that all derivatives belong to 
$L^p(\mathbb{R}^n)$. Its dual space $D_{L^p}'(\mathbb{R}^n)$ consists of all distributions $f$ which can be represented as 
\begin{equation}\label{dlp}f=\sum_{j=1}^{N} \partial^{\alpha_j}{f_{j}},\, f_j \in L^p(\mathbb{R}^n)\end{equation} 
for some $N\in \mathbb{N}, \, \alpha_j \in \mathbb{N}_{0}^n$ (\cite{Sch66}, Thm. XXV, page 201).

We use the notation $\mathbb{Y}^{n+1}=\mathbb{S}^{n-1} \times \mathbb{R} \times \mathbb{R}^{\times}$, where $\mathbb{R}^{\times}=\mathbb{R} \setminus \lbrace 0 \rbrace$ and  $\mathbb{S}^{n-1}$ stands for the unit sphere of $\mathbb{R}^n$. We always suppose that $n \geq 2$. We introduce $\mathcal{S}(\mathbb{Y}^{n+1})$ as a space of all functions $\Phi \in C^{\infty}(\mathbb{Y}^{n+1})$ such that
\begin{equation}
\rho_{s,r}^{l,m,k}(\Phi)=\sup_{(\textbf{u},b,a)\in \mathbb{Y}^{n+1} }{(1+|b|^2)^{r/2} (|a|^s+|a|^{-s}) |\partial_{a}^{l}  \partial_{b}^{m} \Delta_{\textbf{u}}^{k} \Phi(\textbf{u},b,a)|}<\infty,
\end{equation}
for all $s,r,l,m,k \in \mathbb{N}_0$, where $\Delta_\textbf{u}$ stands for the Laplace-Beltrami operator on the unit sphere $\mathbb{S}^{n-1}$. It is topologized in the usually way.
Its dual space is denoted by $\mathcal{S}'(\mathbb{Y}^{n+1})$ and will be fundamental in our definition of the directional Stockwell transform of Lizorkin
distributions, as it contains the range of this transform (cf. Section 5). {We fix $|a|^{n-2}dbdad\textbf{u}$ as a standard measure on $\mathbb{Y}^{n+1}$, where $d\textbf{u}$ stands for the surface measure on the sphere $\mathbb{S}^{n-1}$.}
If $F$ is a locally integrable function of slow growth on $\mathbb{Y}^{n+1}$, i.e. if there exist $C>0, s\in \mathbb{N}_0$ such that 
$$|F(\textbf{u},b,a)| \leq C \left( |a|^s+|a|^{-s} \right) \left( 1+|b| \right)^s,  \enspace (\textbf{u},b,a)\in \mathbb{Y}^{n+1}, $$
then $F$ will be identified with an element of $\mathcal{S}'(\mathbb{Y}^{n+1})$ via the action 
\begin{equation} \label{standardidentification}
\langle F,\Phi \rangle:= \int_{\mathbb{S}^{n-1}}\int_{\mathbb{R}^{\times}} \int_{\mathbb{R}} F(\textbf{u},b,a)\Phi(\textbf{u},b,a)|a|^{n-2} dbdad\textbf{u},  \enspace \Phi \in \mathcal{S}(\mathbb{Y}^{n+1}).
\end{equation}

%%%%%%%%%%%%%%%%%%%%%%%%%%%%%%%%%%%%%%%%%%%%%%%%%%5
\subsection{One-dimensional Stockwell transform }

The Stockwell transform of $f\in L^2(\mathbb{R})$ with respect to a window $\psi\in L^1(\mathbb{R})\cap L^2(\mathbb{R})$ is defined as
 \begin{equation}\label{one-dimensiional ST}S_{\psi} f (b,a)= \frac{|a|}{\sqrt{2 \pi}} \int_{\mathbb{R}}f(x) \overline{\psi}\big(a(x-b)\big ) e^{-ixa} dx,\end{equation}
for  $b\in \mathbb{R}$ and $a\in \mathbb{R} \setminus \lbrace 0 \rbrace$ \cite{Sto96, Sto2007, Du07}. 
 The authors of \cite{San20} give a complete and rigorous distributional framework for the Stockwell transform based on the duality theory. Moreover, they showed that the distributional Stockwell transform can equivalently be defined by direct evaluation of a distribution on a certain family of test functions. So if $f\in \mathcal{S}'(\mathbb{R})$ and $\psi\in \mathcal{S}(\mathbb{R})$, one can replace (\ref{one-dimensiional ST}) by $$S_{\psi}f(b,a)=\frac{1}{\sqrt{2 \pi}} \Big\langle f({x}), |a|\,\overline{\psi}\big(a(x-b)\big ) e^{-ixa} \Big\rangle, b\in \mathbb{R},  a\in \mathbb{R} \setminus \lbrace 0 \rbrace.$$

%%%%%%%%%%%%%%%%%%%%%%%%%%%%%%%%%%%%%%%%%%%%%%%%%%%%%%%%%%%
\subsection{The Radon transform }

In this subsection we give some preliminaries of the Radon transform that are important for our analysis \cite{Her83, Hel99}. 

The Radon transform of $f \in L^{1}(\mathbb{R}^n)$ is given by 
$$
Rf_\textbf{u}(p)=\int_{\textbf{x} \cdot \textbf{u} =p} f(\textbf{x})d\textbf{x}= \int_{\mathbb{R}^n} f(\textbf{x}) \delta(p-\textbf{x}\cdot \textbf{u} )d\textbf{x},
$$
where $\textbf{u}\in \mathbb{S}^{n-1},\, p\in \mathbb{R}$ and $\delta$ is the Dirac-delta function \cite{Her83}. Using the Fubini's theorem, one can show that $Rf \in L^1(\mathbb{S}^{n-1} \times \mathbb{R})$ for $f\in L^1(\mathbb{R}^n)$. The dual Radon transform $R^*\varrho$ of a function $\varrho \in L^{\infty}(\mathbb{S}^{n-1}\times \mathbb{R})$ is defined as 
$$ R^*\varrho(\textbf{x})=\int_{\mathbb{S}^{n-1}} \varrho(\textbf{u},\textbf{x} \cdot \textbf{u})d\textbf{u},
$$ where $\textbf{x}\in \mathbb{R}^n$ \cite{Her83}.

A connection between the Fourier transform and the Radon transform, known as the Fourier slice theorem, states that for $f \in L^{1}(\mathbb{R}^n)$  is true \begin{equation}\label{Slice}\widehat{Rf_\textbf{u}}(p)=\widehat{f}(p\textbf{u}),\end{equation}
where $\textbf{u}\in \mathbb{S}^{n-1}$ and $p\in \mathbb{R}$ \cite{Her83}. 

%%%%%%%%%%%%%%%%%%%%%%%%%%%%%%%%%%%%%%%%%%%%%%%%%%%%%%%%
\section{Directional Stockwell transform of functions}

In this section we define the directional Stockwell transform and prove an extended Parseval identity and a reconstruction formula that suggests the definition of the directional Stockwell synthesis operator.

\begin{definition}
Let $\psi\in \mathcal{S}(\mathbb{R})$. The directional Stockwell transform of an integrable function $f\in L^1(\mathbb{R}^n)$ is defined as 
\begin{equation}\label{def_DST}
\begin{split}
DS_{\psi}f(\textbf{u},b,a):&=\left( f(\textbf{x}), \psi_{\textbf{u},b,a}(\textbf{x}) \right)_{L^2}\\
&= \frac{|a|}{(2 \pi)^{n/2}} \int_{\mathbb{R}^n}{f(\textbf{x})\overline{\psi}\big(a(\textbf{x} \cdot \textbf{u}-b)\big) e^{-ia(\textbf{x} \cdot \textbf{u})}d\textbf{x}},
\end{split}
\end{equation}
where $\psi_{\textbf{u},b,a}(\textbf{x})=\frac{|a|}{(2 \pi)^{n/2}} \psi(a(\textbf{x} \cdot \textbf{u}-b)) e^{ia(\textbf{x} \cdot \textbf{u})}, \ (\textbf{u},b,a)\in \mathbb{Y}^{n+1} , \ \textbf{x}\in \mathbb{R}^n$.
\end{definition}

The following proposition presents a useful relation between the directional Stockwell transform and the Fourier
transform.

\begin{proposition}
For $\psi\in \mathcal{S}(\mathbb{R})$ and $f\in L^1(\mathbb{R}^n)$ is true
\begin{equation*}DS_\psi f({\normalfont\textbf{u}},b,a)=\frac{e^{-iab}}{(2\pi)^{n/2+1}} \int_{\mathbb{R}}{\widehat{f}(\xi \normalfont\textbf{u}) \overline{\widehat{\psi}\left(\frac{\xi}{a}-1\right)}  e^{i \xi b} d \xi}.\end{equation*}
\end{proposition}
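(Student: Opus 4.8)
The plan is to reduce the $n$-dimensional integral in \eqref{def_DST} to a one-dimensional one by exploiting that the window factor depends on $\textbf{x}$ only through the scalar $\textbf{x}\cdot\textbf{u}$, and then to pass to the frequency variable using the Fourier slice theorem \eqref{Slice}.

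First I would set $g_{b,a}(p)=\overline{\psi}\big(a(p-b)\big)e^{-iap}$, so that the integrand in \eqref{def_DST} equals $f(\textbf{x})\,g_{b,a}(\textbf{x}\cdot\textbf{u})$. Slicing $\mathbb{R}^n$ into the hyperplanes $\{\textbf{x}\cdot\textbf{u}=p\}$ and invoking Fubini's theorem (legitimate since $f\in L^1(\mathbb{R}^n)$ and $g_{b,a}$ is bounded), the definition of the Radon transform gives
\[
DS_{\psi}f(\textbf{u},b,a)=\frac{|a|}{(2\pi)^{n/2}}\int_{\mathbb{R}} Rf_\textbf{u}(p)\,\overline{\psi}\big(a(p-b)\big)e^{-iap}\,dp .
\]
Up to the constant $(2\pi)^{(n-1)/2}$ this is exactly the one-dimensional Stockwell transform $S_\psi(Rf_\textbf{u})(b,a)$ of the profile $Rf_\textbf{u}\in L^1(\mathbb{R})$, which already ties the statement to Subsection 2.2.

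Next I would move this last integral to the frequency side. Writing $\chi_{b,a}(p)=\psi\big(a(p-b)\big)e^{iap}$, so that $\overline{\chi_{b,a}}=g_{b,a}$, Parseval's formula yields $\int_{\mathbb{R}}Rf_\textbf{u}(p)\,\overline{\chi_{b,a}(p)}\,dp=\tfrac{1}{2\pi}\int_{\mathbb{R}}\widehat{Rf_\textbf{u}}(\xi)\,\overline{\widehat{\chi_{b,a}}(\xi)}\,d\xi$, and the Fourier slice theorem \eqref{Slice} replaces $\widehat{Rf_\textbf{u}}(\xi)$ by $\widehat{f}(\xi\textbf{u})$. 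It then only remains to compute the one-dimensional Fourier transform of $\chi_{b,a}$: combining the modulation, dilation, and translation rules gives
\[
\widehat{\chi_{b,a}}(\xi)=\frac{1}{|a|}\,e^{-ib(\xi-a)}\,\widehat{\psi}\Big(\frac{\xi}{a}-1\Big),
\]
whence $\overline{\widehat{\chi_{b,a}}(\xi)}=\tfrac{1}{|a|}\,e^{ib(\xi-a)}\,\overline{\widehat{\psi}(\xi/a-1)}$. Substituting this back, the factor $e^{ib(\xi-a)}$ splits as $e^{-iab}e^{i\xi b}$, the $1/|a|$ cancels the prefactor $|a|$, and collecting the powers of $2\pi$ (namely $(2\pi)^{-n/2}\cdot(2\pi)^{-1}$) produces exactly the claimed identity
\[
DS_\psi f(\textbf{u},b,a)=\frac{e^{-iab}}{(2\pi)^{n/2+1}}\int_{\mathbb{R}}\widehat{f}(\xi\textbf{u})\,\overline{\widehat{\psi}\Big(\frac{\xi}{a}-1\Big)}\,e^{i\xi b}\,d\xi .
\]

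The one genuinely delicate point is the Parseval step, since for $f\in L^1(\mathbb{R}^n)$ the profile $Rf_\textbf{u}$ lies in $L^1(\mathbb{R})$ but need not be square integrable, so I cannot literally invoke the $L^2$ Plancherel identity. I expect to handle this by using instead the $L^1$ multiplication (exchange) formula $\int_{\mathbb{R}}Rf_\textbf{u}(p)\,g_{b,a}(p)\,dp=\int_{\mathbb{R}}\widehat{Rf_\textbf{u}}(\xi)\,\mathcal{F}^{-1}g_{b,a}(\xi)\,d\xi$, valid because $g_{b,a}\in\mathcal{S}(\mathbb{R})$ and $Rf_\textbf{u}\in L^1(\mathbb{R})$, together with $\mathcal{F}^{-1}g_{b,a}=\tfrac{1}{2\pi}\overline{\widehat{\chi_{b,a}}}$; this reproduces the same identity without any $L^2$ hypothesis. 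The remaining ingredients, Fubini and the translation/modulation/dilation behaviour of $\widehat{\psi}$, are routine, so once the frequency-side reduction is justified the constant bookkeeping finishes the proof.
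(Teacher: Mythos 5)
Your proof is correct and follows essentially the same route as the paper's: reduce to the Radon slices $Rf_{\textbf{u}}$ via Fubini, pass to the frequency side, and apply the Fourier slice theorem \eqref{Slice}, with identical constant bookkeeping. The one difference is to your credit: the paper performs the Parseval step silently, whereas you correctly observe that $Rf_{\textbf{u}}$ is only known to lie in $L^1(\mathbb{R})$ and justify the step by the $L^1$ multiplication (exchange) formula with the Schwartz window, which makes rigorous a point the paper glosses over.
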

\begin{proof}
By the Fubini's theorem and the Fourier slice theorem (\ref{Slice}), we have
\begin{align*}
DS_\psi f(\textbf{u},b,a)&=  \frac{|a|}{(2 \pi)^{n/2}} \int_{\mathbb{R}^n}{f(\textbf{x})\overline{\psi(a(\textbf{x} \cdot \textbf{u}-b))} e^{-ia(\textbf{x} \cdot \textbf{u})}d\textbf{x}}\\
&=\frac{|a|}{(2 \pi)^{n/2}} \int_{\mathbb{R}^n}{f(\textbf{x})d\textbf{x} \int_{\mathbb{R}} \overline{\psi(a(p-b))} e^{-iap} \delta (p-\textbf{x} \cdot \textbf{u})dp }\\
&=\frac{|a|}{(2 \pi)^{n/2}} \int_{\mathbb{R}}{\overline{\psi(a(p-b))} e^{-iap} dp \int_{\mathbb{R}^n}{f(\textbf{x})\delta (p-\textbf{x} \cdot \textbf{u})d\textbf{x}}}\\
&=\frac{|a|}{(2 \pi)^{n/2}} \int_{\mathbb{R}}{Rf_{\textbf{u}}(p) \overline{\psi(a(p-b))} e^{-iap} dp }\\
&=\frac{1}{(2 \pi)^{n/2+1}} \int_{\mathbb{R}}{\widehat{Rf_{\textbf{u}}}(\xi) \overline{\widehat{\psi}\left(\frac{\xi}{a}-1\right)} e^{i(\xi-a)b} d \xi }\\
&=\frac{e^{-iab}}{(2\pi)^{n/2+1}} \int_{\mathbb{R}}{\widehat{f}(\xi \textbf{u}) \overline{\widehat{\psi}\left(\frac{\xi}{a}-1\right)}  e^{i \xi b} d \xi}.
\end{align*}
\end{proof}

From the previous proof, we obtain the formula 
\begin{equation} \label{relation}
    \begin{split}
        DS_\psi f(\textbf{u},b,a)&=\frac{|a|}{(2 \pi)^{n/2}} \int_{\mathbb{R}}{Rf_{\textbf{u}}(p) \overline{\psi(a(p-b))} e^{-iap} dp }=\frac{1}{(2 \pi)^{\frac{n-1}{2}}}S_\psi(Rf_{\textbf{u}})(b,a). 
    \end{split}
\end{equation}
Thus, the directional Stockwell transform is precisely the application of a one-dimensional Stockwell transform to the slices of the
Radon transform where $\textbf{u}$ remains fixed and $p$ varies. The formula (\ref{relation}) presents the relation between the Radon, directional Stockwell and Stockwell transforms.

\begin{definition}
Let $\psi\in \mathcal{S}(\mathbb{R})$ be a non-trivial window. A function $\eta \in \mathcal{S}(\mathbb{R}) $ is a reconstruction window of $\psi$ if 
\begin{equation} \label{5}
C_{\psi,\eta}=\frac{1}{\pi} \int_{\mathbb{R}}{\overline{\widehat{\psi}(\xi-1)} \widehat{\eta}(\xi-1) \frac{d\xi}{|\xi|^n}}
\end{equation}
is nonzero and finite. If $\eta=\psi$ in (\ref{5}) then $\psi$ is called an admissible window.
\end{definition}
One can show that every non-trivial $\psi\in \mathcal{S}(\mathbb{R})$ has a reconstruction window $\eta$ which may be chosen such that $e^{ix}\eta(x) \in \mathcal{S}_0(\mathbb{R})$.

The next proposition gives an extended Parseval's identity.

\begin{proposition}(Extended Parseval's Identity)
Let $\psi\in \mathcal{S}(\mathbb{R})$ be a non-trivial window and let $\eta \in \mathcal{S}(\mathbb{R}) $ be a reconstruction window for it. Then 
$$\int_{\mathbb{R}^n} f(\normalfont\textbf{x}) \overline{h(\normalfont\textbf{x})}d\normalfont\textbf{x}=\frac{1}{C_{\psi,\eta}} \int_{\mathbb{S}^{n-1}}\int_{\mathbb{R}^{\times}} \int_{\mathbb{R}} DS_\psi f(\textbf{u},b,a) \overline{DS_{\eta} h(\textbf{u},b,a)}|a|^{n-2} dbdad\textbf{u},$$
for $f,h \in L^1(\mathbb{R}^n)\cap L^2(\mathbb{R}^n)$.
\end{proposition}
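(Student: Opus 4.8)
The plan is to feed the Fourier-domain formula for the directional Stockwell transform, established in the proposition above, into the triple integral on the right-hand side, and then to disentangle the three integrations in the order $db$, $da$, $d\textbf{u}$: the $b$-integration collapses the two frequency variables through a Dirac mass, the $a$-integration reproduces exactly the constant $C_{\psi,\eta}$ of (\ref{5}), and the spherical integration recombines with a polar change of variables into the $L^2(\mathbb{R}^n)$ pairing via Plancherel's theorem.

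Concretely, I would first write both $DS_\psi f(\textbf{u},b,a)$ and $\overline{DS_\eta h(\textbf{u},b,a)}$ using the identity $DS_\psi f(\textbf{u},b,a)=(2\pi)^{-n/2-1}e^{-iab}\int_{\mathbb{R}}\widehat f(\xi\textbf{u})\overline{\widehat\psi(\xi/a-1)}e^{i\xi b}\,d\xi$ proved above, and multiply them. Integrating the product against $db$ first produces $\int_{\mathbb{R}}e^{ib(\xi-\xi')}db=2\pi\,\delta(\xi-\xi')$, which identifies the two frequency variables $\xi=\xi'$ and leaves a single frequency integral. The remaining $a$-integral is
$$
\int_{\mathbb{R}^\times}|a|^{n-2}\,\overline{\widehat\psi\!\left(\tfrac{\xi}{a}-1\right)}\,\widehat\eta\!\left(\tfrac{\xi}{a}-1\right)\,da,
$$
and the substitution $t=\xi/a$ converts the weight $|a|^{n-2}\,da$ into $|\xi|^{n-1}|t|^{-n}\,dt$, so by the very definition (\ref{5}) this integral equals $|\xi|^{n-1}\pi C_{\psi,\eta}$. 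At this stage the double integral over $(b,a)$ has become $\frac{\pi C_{\psi,\eta}}{(2\pi)^{n+1}}\int_{\mathbb{R}}\widehat f(\xi\textbf{u})\,\overline{\widehat h(\xi\textbf{u})}\,|\xi|^{n-1}\,d\xi$.

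It then remains to integrate over $\textbf{u}\in\mathbb{S}^{n-1}$ and pass to polar coordinates on $\mathbb{R}^n$. Splitting $\xi\in\mathbb{R}$ into its positive and negative parts and using that the surface measure on $\mathbb{S}^{n-1}$ is invariant under $\textbf{u}\mapsto-\textbf{u}$, each half contributes $\int_{\mathbb{R}^n}\widehat f(\boldsymbol\xi)\overline{\widehat h(\boldsymbol\xi)}\,d\boldsymbol\xi$, so that $\int_{\mathbb{S}^{n-1}}\int_{\mathbb{R}}\widehat f(\xi\textbf{u})\overline{\widehat h(\xi\textbf{u})}|\xi|^{n-1}\,d\xi\,d\textbf{u}=2\int_{\mathbb{R}^n}\widehat f\,\overline{\widehat h}\,d\boldsymbol\xi$. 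Collecting the constant, $\tfrac{\pi C_{\psi,\eta}}{(2\pi)^{n+1}}\cdot 2=C_{\psi,\eta}(2\pi)^{-n}$, and applying the Plancherel identity $(2\pi)^{-n}\int_{\mathbb{R}^n}\widehat f\,\overline{\widehat h}\,d\boldsymbol\xi=\int_{\mathbb{R}^n}f\,\overline h\,d\textbf{x}$, the right-hand side becomes $C_{\psi,\eta}\int_{\mathbb{R}^n}f\overline h\,d\textbf{x}$, which is the asserted identity after dividing by $C_{\psi,\eta}$. (An essentially equivalent route runs through the factorization (\ref{relation}) and a one-dimensional Stockwell-Plancherel formula for the slices $Rf_\textbf{u}$, followed by the Fourier slice theorem (\ref{Slice}); the constants combine in the same way.)

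The hard part will not be any single algebraic identity but the analytic justification of the two interchanges of the order of integration and of the $\delta$-function step. For $f,h\in L^1(\mathbb{R}^n)\cap L^2(\mathbb{R}^n)$ the Fourier transforms $\widehat f,\widehat h$ are bounded and continuous, and the rapid decay of $\widehat\psi,\widehat\eta\in\mathcal{S}(\mathbb{R})$ controls the kernels; I would therefore make the $b$-integration rigorous by reading it as a Plancherel pairing in the $b$ variable rather than as a literal Dirac mass, and apply Fubini's theorem on the strength of these decay and boundedness bounds, taking care with the absolute values in the substitution $t=\xi/a$ over $\mathbb{R}^\times$ so that the weight $|a|^{n-2}$ matches $C_{\psi,\eta}$ exactly.
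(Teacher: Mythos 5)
Your proposal is correct and follows essentially the same route as the paper's proof: the paper likewise inserts the Fourier-domain formula of Proposition 3.2, carries out the $b$-integration rigorously as a Parseval pairing in $b$ (exactly your proposed rigorous reading of the Dirac-mass step, justified by noting $\widehat f(\cdot\,\textbf{u})\,\overline{\widehat\psi(\cdot/a-1)}\in L^1(\mathbb{R})\cap L^2(\mathbb{R})$), then uses Fubini and the substitution $t=\xi/a$ to extract $\pi C_{\psi,\eta}|\xi|^{n-1}$, and concludes via the $\textbf{u}\mapsto-\textbf{u}$ symmetry, polar coordinates, and Plancherel on $\mathbb{R}^n$. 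All your constants match the paper's computation.
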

\begin{proof} First we can see that $\widehat{f}(\cdot \textbf{u}) \overline{\widehat{\psi} \left( \frac{\cdot}{a}-1 \right)}, \,\widehat{h}(\cdot \textbf{u}) \overline{\widehat{\eta} \left( \frac{\cdot}{a}-1 \right)} \in L^1(\mathbb{R})\cap L^2(\mathbb{R})$.
Using Proposition 3.2, the Parseval's identity for the Fourier transform and the Fubini's theorem, we obtain
\begin{align*}
&\quad\,\int_{\mathbb{S}^{n-1}}\int_{\mathbb{R}^{\times}} \int_{\mathbb{R}} DS_\psi f(\textbf{u},b,a) \overline{DS_{\eta} h(\textbf{u},b,a)}|a|^{n-2} dbdad\textbf{u} \\
&=\frac{1}{(2 \pi)^{n+2}}\int_{\mathbb{S}^{n-1}}\int_{\mathbb{R}^{\times}} |a|^{n-2}  \left( \int_{\mathbb{R}} \mathcal{F} \left( \widehat{f}(\cdot \textbf{u}) \overline{\widehat{\psi} \left( \frac{\cdot}{a}-1 \right)} \right)(-b) \overline{\mathcal{F} \left( \widehat{h}(\cdot \textbf{u}) \overline{\widehat{\eta} \left( \frac{\cdot}{a}-1 \right)} \right)(-b)} db\right )dad\textbf{u}\\
&=\frac{1}{(2 \pi)^{n+2}}\int_{\mathbb{S}^{n-1}}\int_{\mathbb{R}^{\times}} |a|^{n-2}\left (  \int_{\mathbb{R}} \mathcal{F} \left( \widehat{f}(\cdot \textbf{u}) \overline{\widehat{\psi} \left( \frac{\cdot}{a}-1 \right)} \right)(b) \overline{\mathcal{F} \left( \widehat{h}(\cdot \textbf{u}) \overline{\widehat{\eta} \left( \frac{\cdot}{a}-1 \right)} \right)(b)} db\right ) dad\textbf{u}\\
&=\frac{1}{(2 \pi)^{n+1}} \int_{\mathbb{S}^{n-1}}\int_{\mathbb{R}^{\times}} |a|^{n-2} \left (\int_{\mathbb{R}}  \widehat{f}(\xi \textbf{u}) \overline{\widehat{h}(\xi \textbf{u})} \widehat{\eta} \left( \frac{\xi}{a}-1 \right) \overline{\widehat{\psi} \left( \frac{\xi}{a}-1 \right)} d \xi\right ) dad\textbf{u} \\
&=\frac{1}{(2 \pi)^{n+1}} \int_{\mathbb{R}} d \xi \int_{\mathbb{S}^{n-1}} \widehat{f}(\xi \textbf{u}) \overline{\widehat{h}(\xi \textbf{u})} d\textbf{u} \int_{\mathbb{R}^{\times}} |a|^{n-2} \widehat{\eta} \left( \frac{\xi}{a}-1 \right) \overline{\widehat{\psi} \left( \frac{\xi}{a}-1 \right)} da\\
&=\frac{1}{(2 \pi)^{n+1}} \int_{\mathbb{R}} |\xi|^{n-1} d \xi \int_{\mathbb{S}^{n-1}} \widehat{f}(\xi \textbf{u}) \overline{\widehat{h}(\xi \textbf{u})} d\textbf{u} \int_{\mathbb{R}}  \widehat{\eta} \left( t-1 \right) \overline{\widehat{\psi} \left( t-1 \right)} \frac{dt}{|t|^n}\end{align*}
\begin{align*}
&= \frac{C_{\psi,\eta}}{2}\frac{1}{(2 \pi)^{n}} \int_{\mathbb{R}} |\xi|^{n-1} d \xi \int_{\mathbb{S}^{n-1}} \widehat{f}(\xi \textbf{u}) \overline{\widehat{h}(\xi \textbf{u})} d\textbf{u}\\
&=\frac{C_{\psi,\eta}}{(2 \pi)^{n}} \int_{0}^{\infty} |\xi|^{n-1} d \xi \int_{\mathbb{S}^{n-1}} \widehat{f}(\xi \textbf{u}) \overline{\widehat{h}(\xi \textbf{u})} d\textbf{u}\\&=\frac{C_{\psi,\eta}}{(2 \pi)^{n}} \int_{\mathbb{R}^n} \widehat{f}(\textbf{w}) \overline{\widehat{h}(\textbf{w})} d\textbf{w}=C_{\psi,\eta} \int_{\mathbb{R}^n} f(\textbf{x}) \overline{h(\textbf{x})}d\textbf{x}.
\end{align*} \end{proof}
The next proposition gives the reconstruction formula.

\begin{proposition}(Reconstruction formula)
Let $\psi\in \mathcal{S}(\mathbb{R})$ be a non-trivial window and let $\eta \in \mathcal{S}(\mathbb{R}) $ be a reconstruction window for it. If $f\in L^1(\mathbb{R}^n)$ such that $\widehat{f}\in L^1(\mathbb{R}^n) $, then 
$$f(\normalfont\textbf{x})=\frac{1}{C_{\psi,\eta}} \int_{\mathbb{S}^{n-1}}\int_{\mathbb{R}^{\times}} \int_{\mathbb{R}} DS_\psi f(\textbf{u},b,a) \eta _{\textbf{u},b,a}(\textbf{x})|a|^{n-2} dbdad\textbf{u},   \enspace a.e.\,\, \textbf{x}\in \mathbb{R}^n.$$
\end{proposition}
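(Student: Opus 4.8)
The plan is to evaluate the right-hand side directly, reusing the algebra that established the extended Parseval identity (Proposition 3.4) but with the second slot filled by the synthesis atom $\eta_{\textbf{u},b,a}(\textbf{x})$ in place of $DS_\eta h$. Write $I(\textbf{x})$ for the triple integral on the right. Since $f\in L^1(\mathbb{R}^n)$ with $\widehat f\in L^1(\mathbb{R}^n)$, Fourier inversion shows that $f$ is continuous and bounded, so the target identity $I(\textbf{x})=f(\textbf{x})$ is meaningful pointwise, and the whole computation can be anchored to the Fourier inversion integral at the end.

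First I would insert the Fourier representation of Proposition 3.2 for $DS_\psi f(\textbf{u},b,a)$ and rewrite the atom through Fourier inversion of $\eta$,
\[
\eta_{\textbf{u},b,a}(\textbf{x})=\frac{e^{iab}}{(2\pi)^{n/2+1}}\int_{\mathbb{R}}\widehat\eta\Big(\tfrac{\zeta}{a}-1\Big)\,e^{i\zeta(\textbf{x}\cdot\textbf{u})}\,e^{-i\zeta b}\,d\zeta,
\]
so that the phases $e^{-iab}$ and $e^{iab}$ cancel. I would then integrate in $b$ first. As in the proof of Proposition 3.4, the $b$-integral is an inner product of two Fourier transforms in $b$, and Parseval's identity for the Fourier transform collapses the two frequency variables $\xi,\zeta$ to a single one (informally, $\int_{\mathbb{R}}e^{i(\xi-\zeta)b}\,db=2\pi\,\delta(\xi-\zeta)$), leaving
\[
\int_{\mathbb{R}}DS_\psi f(\textbf{u},b,a)\,\eta_{\textbf{u},b,a}(\textbf{x})\,db=\frac{1}{(2\pi)^{n+1}}\int_{\mathbb{R}}\widehat f(\xi\textbf{u})\,\overline{\widehat\psi\Big(\tfrac{\xi}{a}-1\Big)}\,\widehat\eta\Big(\tfrac{\xi}{a}-1\Big)\,e^{i\xi(\textbf{x}\cdot\textbf{u})}\,d\xi.
\]

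Next I would multiply by $|a|^{n-2}$ and integrate in $a$: the substitution $t=\xi/a$ turns the inner $a$-integral into $|\xi|^{n-1}\int_{\mathbb{R}}\overline{\widehat\psi(t-1)}\,\widehat\eta(t-1)\,|t|^{-n}\,dt=\pi\,C_{\psi,\eta}\,|\xi|^{n-1}$, which recovers exactly the admissibility constant (\ref{5}). After dividing by $C_{\psi,\eta}$ and integrating over $\textbf{u}\in\mathbb{S}^{n-1}$ this yields
\[
I(\textbf{x})=\frac{1}{2(2\pi)^n}\int_{\mathbb{S}^{n-1}}\int_{\mathbb{R}}\widehat f(\xi\textbf{u})\,|\xi|^{n-1}\,e^{i\xi(\textbf{x}\cdot\textbf{u})}\,d\xi\,d\textbf{u}.
\]
Splitting the $\xi$-integral at $0$ and using the antipodal symmetry of the surface measure ($\textbf{u}\mapsto-\textbf{u}$) to merge the contributions of $\xi>0$ and $\xi<0$, the right-hand side becomes the polar form of the Fourier inversion integral $\frac{1}{(2\pi)^n}\int_{\mathbb{R}^n}\widehat f(\textbf{w})\,e^{i\textbf{w}\cdot\textbf{x}}\,d\textbf{w}=f(\textbf{x})$, valid for a.e. $\textbf{x}$ since $\widehat f\in L^1(\mathbb{R}^n)$.

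I expect the main obstacle to be the rigorous justification of the interchanges of integration, in particular the $b$-integration: the atoms $\eta_{\textbf{u},b,a}$ are ridge functions and do not lie in $L^2(\mathbb{R}^n)$, so the delta heuristic must be replaced by Parseval's identity in $b$ (exactly as in Proposition 3.4), and the combined integrand $DS_\psi f(\textbf{u},b,a)\,\eta_{\textbf{u},b,a}(\textbf{x})\,|a|^{n-2}$ must be shown to be absolutely integrable over $\mathbb{Y}^{n+1}$; the delicate regions $a\to 0$ and $|a|\to\infty$ are controlled precisely by the finiteness of $C_{\psi,\eta}$ together with the Schwartz decay of $\widehat\psi,\widehat\eta$ and the hypothesis $\widehat f\in L^1$. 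An alternative that sidesteps pointwise convergence is to test $I$ against $h\in L^1(\mathbb{R}^n)\cap L^2(\mathbb{R}^n)$, recognize $\int_{\mathbb{R}^n}\eta_{\textbf{u},b,a}(\textbf{x})\,\overline{h(\textbf{x})}\,d\textbf{x}=\overline{DS_\eta h(\textbf{u},b,a)}$, and invoke Proposition 3.4 to obtain $(I,h)_{L^2}=(f,h)_{L^2}$, whence $I=f$ a.e. by density.
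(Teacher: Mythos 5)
Your proposal is correct and follows essentially the same route as the paper's proof: insert the Fourier representation of Proposition 3.2, use Fubini, integrate in $b$ to produce $\widehat{\eta}\left(\frac{\xi}{a}-1\right)$, substitute $t=\xi/a$ in the $a$-integral to recover $\pi C_{\psi,\eta}|\xi|^{n-1}$, and conclude by antipodal symmetry and the polar form of the Fourier inversion formula. The only cosmetic difference is at the $b$-integration: the paper evaluates $\int_{\mathbb{R}}\eta(a(\textbf{u}\cdot\textbf{x}-b))e^{i(\xi-a)b}db$ directly as a one-dimensional Fourier transform of $\eta$ via the substitution $t=a(\textbf{u}\cdot\textbf{x}-b)$, which sidesteps the delta heuristic (and your Parseval patch for it) altogether.
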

\begin{proof}
Using Proposition 3.2 and the Fubini's theorem twice, we have
\begin{align*}
&\quad\,\int_{\mathbb{S}^{n-1}}\int_{\mathbb{R}^{\times}} \int_{\mathbb{R}} DS_\psi f(\textbf{u},b,a) \eta _{\textbf{u},b,a}(\textbf{x})|a|^{n-2} dbdad\textbf{u}\\
&= \frac{1}{(2 \pi)^{n+1}} \int_{\mathbb{S}^{n-1}}\int_{\mathbb{R}^{\times}} e^{ia(\textbf{u}\cdot \textbf{x})} |a|^{n-1} dad\textbf{u}  \int_{\mathbb{R}} \eta (a(\textbf{u} \cdot \textbf{x} -b)) e^{-iab}db   \int_{\mathbb{R}}{\widehat{f}(\xi \textbf{u}) \overline{\widehat{\psi}\left(\frac{\xi}{a}-1\right)}  e^{i \xi b} d \xi}\\
&=\frac{1}{(2 \pi)^{n+1}} \int_{\mathbb{S}^{n-1}}\int_{\mathbb{R}^{\times}} e^{ia(\textbf{u}\cdot \textbf{x})} |a|^{n-1} dad\textbf{u}  \int_{\mathbb{R}}{\widehat{f}(\xi \textbf{u}) \overline{\widehat{\psi}\left(\frac{\xi}{a}-1\right)}  d \xi}  \int_{\mathbb{R}} \eta(a(\textbf{u} \cdot \textbf{x} -b)) e^{i(\xi -a)b}db\\
&=\frac{1}{(2 \pi)^{n+1}} \int_{\mathbb{S}^{n-1}}\int_{\mathbb{R}^{\times}} e^{ia(\textbf{u}\cdot \textbf{x})} |a|^{n-1} dad\textbf{u}  \int_{\mathbb{R}}{\widehat{f}(\xi \textbf{u}) \overline{\widehat{\psi}\left(\frac{\xi}{a}-1\right)}  d \xi}  \int_{\mathbb{R}} \frac{1}{|a|} \eta(t) e^{i(\xi -a)\textbf{u}\cdot \textbf{x}} e^{-i(\frac{\xi}{a} -1)t}dt\\
&=\frac{1}{(2 \pi)^{n+1}} \int_{\mathbb{S}^{n-1}}\int_{\mathbb{R}^{\times}} |a|^{n-2} dad\textbf{u} \int_{\mathbb{R}}  e^{i \xi(\textbf{u}\cdot \textbf{x})} \widehat{f}(\xi \textbf{u}) \overline{\widehat{\psi}\left(\frac{\xi}{a}-1\right) } \widehat{\eta}\left(\frac{\xi}{a}-1\right) d \xi \\
&=\frac{1}{(2 \pi)^{n+1}} \int_{\mathbb{R}} d \xi \int_{\mathbb{S}^{n-1}} e^{i \xi(\textbf{u}\cdot \textbf{x})} \widehat{f}(\xi \textbf{u}) d\textbf{u}  \int_{\mathbb{R}^{\times}} |a|^{n-2} \overline{\widehat{\psi}\left(\frac{\xi}{a}-1\right) } \widehat{\eta}\left(\frac{\xi}{a}-1\right) da \\
&=\frac{1}{(2 \pi)^{n+1}} \int_{\mathbb{R}} |\xi|^{n-1} d \xi \int_{\mathbb{S}^{n-1}}  \widehat{f}(\xi \textbf{u}) e^{i (\xi \textbf{u})\cdot \textbf{x}}d\textbf{u}  \int_{\mathbb{R}} \overline{\widehat{\psi}\left(t-1\right) } \widehat{\eta}\left(t-1\right) \frac{dt}{|t|^n}\\
&=\frac{C_{\psi,\eta}}{2(2 \pi)^{n}} \int_{\mathbb{R}} |\xi|^{n-1} d \xi \int_{\mathbb{S}^{n-1}}  \widehat{f}(\xi \textbf{u}) e^{i (\xi \textbf{u})\cdot \textbf{x}}d\textbf{u}\\&=\frac{C_{\psi,\eta}}{(2 \pi)^{n}} \int_{0}^{\infty} |\xi|^{n-1} d \xi \int_{\mathbb{S}^{n-1}}  \widehat{f}(\xi \textbf{u}) e^{i (\xi \textbf{u})\cdot \textbf{x}}d\textbf{u}\\
&= \frac{C_{\psi,\eta}}{(2 \pi)^{n}} \int_{\mathbb{R}^n}{\widehat{f}(\textbf{w})e^{i\textbf{w}\cdot \textbf{x}}}d\textbf{w}=C_{\psi,\eta} f(\textbf{x}),
\end{align*}
for a.e. $ \textbf{x}\in \mathbb{R}^n$.
\end{proof}

According to our choice of the standard measure on $\mathbb{Y}^{n+1}$, we denote by $L^2(\mathbb{Y}^{n+1}):=L^2(\mathbb{Y}^{n+1},|a|^{n-2}dbdad\textbf{u} )$. So, the inner product on this space is
$$( F,G )_{L^2(\mathbb{Y}^{n+1})}:=\int_{\mathbb{S}^{n-1}}\int_{\mathbb{R}^{\times}} \int_{\mathbb{R}} F(\textbf{u},b,a) \overline{G(\textbf{u},b,a)} |a|^{n-2} dbdad\textbf{u}.$$
Now, under the conditions of Proposition 3.4, we have
\begin{equation*}( f,h)_{L^2(\mathbb{R}^{n})}= \frac{1}{C_{\psi,\eta}}( DS_\psi f,DS_{\eta} h )_{L^2(\mathbb{Y}^{n+1})}.\end{equation*}
If $\psi$ is an admissible window and $f\in L^1(\mathbb{R}^n)\cap L^2(\mathbb{R}^n) $ then 
$$\| DS_\psi f\|_{L^2(\mathbb{Y}^{n+1})}=\sqrt{C_{\psi,\psi}}\,\,\| \,f\|_{L^2(\mathbb{R}^{n})}.$$
Since $\mathcal{S}(\mathbb{R}^n)$ is a dense subset of $L^2(\mathbb{R}^n)$ and the last relation holds true for all $f \in \mathcal{S}(\mathbb{R}^n) $, $DS_{\psi}$ can be extended to a constant multiple of an isometric embedding $L^2(\mathbb{R}^n)\to L^2(\mathbb{Y}^{n+1})$.

The reconstruction formula suggests us to define %\textit{\textbf{the directional Stockwell synthesis operator}} 
the directional Stockwell synthesis operator that maps the function on $\mathbb{Y}^{n+1}$ to a function on $\mathbb{R}^n$.  
Given $\psi\in \mathcal{S}(\mathbb{R})$, we define the directional Stockwell synthesis operator as
$$DS^{*}_{\psi} \Phi(\textbf{x}):=\int_{\mathbb{S}^{n-1}}\int_{\mathbb{R}^{\times}} \int_{\mathbb{R}} \Phi(\textbf{u},b,a) \psi_{\textbf{u},b,a}(\textbf{x}) |a|^{n-2} dbdad\textbf{u}, \enspace \textbf{x}\in \mathbb{R}^n.$$
The last integral is absolutely convergent if $\Phi\in \mathcal{S}(\mathbb{Y}^{n+1})$.
Now, by using the reconstruction formula, we obtain
$$DS^{*}_{\eta}(DS_\psi f)(\textbf{x}):=\int_{\mathbb{S}^{n-1}}\int_{\mathbb{R}^{\times}} \int_{\mathbb{R}} DS_\psi f(\textbf{u},b,a) \eta_{\textbf{u},b,a}\textbf{(x}) |a|^{n-2} dbdad\textbf{u}=C_{\psi,\eta}f(\textbf{x}),$$
for a.e. $\textbf{x}\in \mathbb{R}^n$.

We also show that the directional Stockwell synthesis operator is in fact the transpose of the directional Stockwell transform in the following sense:
\begin{proposition}
Let $\psi\in \mathcal{S}(\mathbb{R})$. If $f\in L^1(\mathbb{R}^n)$ and $\Phi\in \mathcal{S}(\mathbb{Y}^{n+1})$, then
$$\int_{\mathbb{R}^n}f(\normalfont\textbf{x}){\overline{DS_{\psi}^{*}{\overline\Phi}}}(\textbf{x})d\textbf{x}=\int_{\mathbb{S}^{n-1}}\int_{\mathbb{R}^{\times}} \int_{\mathbb{R}} DS_\psi f(\textbf{u},b,a) \Phi(\textbf{u},b,a) |a|^{n-2} dbdad\textbf{u}.$$
\end{proposition}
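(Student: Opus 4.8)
The plan is to reduce the identity to a single application of Fubini's theorem, after unwinding the two conjugations on the left-hand side. First I would write out $DS^{*}_{\psi}\overline{\Phi}(\textbf{x})$ directly from its definition and take the complex conjugate; since conjugation passes through the integral and $\overline{\overline{\Phi}}=\Phi$, this yields
$$\overline{DS^{*}_{\psi}\overline{\Phi}}(\textbf{x})=\int_{\mathbb{S}^{n-1}}\int_{\mathbb{R}^{\times}}\int_{\mathbb{R}}\Phi(\textbf{u},b,a)\,\overline{\psi_{\textbf{u},b,a}(\textbf{x})}\,|a|^{n-2}\,db\,da\,d\textbf{u}.$$
Multiplying by $f(\textbf{x})$ and integrating over $\mathbb{R}^n$ then exhibits the left-hand side as an iterated integral over $\mathbb{R}^n\times\mathbb{Y}^{n+1}$ of the single product $f(\textbf{x})\,\Phi(\textbf{u},b,a)\,\overline{\psi_{\textbf{u},b,a}(\textbf{x})}\,|a|^{n-2}$.

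The decisive step is to interchange the $\textbf{x}$-integration with the integration over $\mathbb{Y}^{n+1}$. Once this swap is legitimate, the inner $\textbf{x}$-integral is exactly $\int_{\mathbb{R}^n}f(\textbf{x})\,\overline{\psi_{\textbf{u},b,a}(\textbf{x})}\,d\textbf{x}=(f,\psi_{\textbf{u},b,a})_{L^2}=DS_\psi f(\textbf{u},b,a)$ by the very definition of the directional Stockwell transform, and the remaining integral against $\Phi(\textbf{u},b,a)|a|^{n-2}$ is precisely the right-hand side. Thus the entire content of the proposition is the admissibility of this interchange, and no further computation is needed beyond the conjugation bookkeeping.

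The main obstacle is therefore verifying the absolute integrability required for Fubini, which is the same estimate already underlying the absolute convergence of $DS^{*}_{\psi}\Phi$ noted above. I would use that $\psi\in\mathcal{S}(\mathbb{R})$ is bounded to get $|\psi_{\textbf{u},b,a}(\textbf{x})|\le(2\pi)^{-n/2}|a|\,\|\psi\|_\infty$; integrating $|f|$ first then contributes a factor $\|f\|_{L^1}$ and leaves the dominating quantity
$$\frac{\|\psi\|_\infty\|f\|_{L^1}}{(2\pi)^{n/2}}\int_{\mathbb{S}^{n-1}}\int_{\mathbb{R}^{\times}}\int_{\mathbb{R}}|\Phi(\textbf{u},b,a)|\,|a|^{n-1}\,db\,da\,d\textbf{u}.$$
To see this is finite I would invoke the seminorm estimate $|\Phi(\textbf{u},b,a)|\lesssim\rho_{s,r}^{0,0,0}(\Phi)\,(1+|b|^2)^{-r/2}(|a|^s+|a|^{-s})^{-1}$. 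Choosing $r>1$ makes the $b$-integral converge, integration over $\mathbb{S}^{n-1}$ contributes a finite constant, and the factor $(|a|^s+|a|^{-s})^{-1}$ controls $|a|^{n-1}$ on $\mathbb{R}^{\times}$: near $a=0$ the bound behaves like $|a|^{n-1+s}$, integrable for every $s\ge0$ since $n\ge2$, while near $a=\infty$ it behaves like $|a|^{n-1-s}$, integrable as soon as $s>n$. Taking any $s>n$ thus renders the dominating integral finite, so the integrand on $\mathbb{R}^n\times\mathbb{Y}^{n+1}$ is absolutely integrable, Fubini's theorem applies, and the identity follows.
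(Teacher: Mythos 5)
Your proposal is correct and follows essentially the same route as the paper's proof: unwind the two conjugations, interchange the $\textbf{x}$-integration with the $\mathbb{Y}^{n+1}$-integration by Fubini, and recognize the inner integral as $DS_\psi f(\textbf{u},b,a)$. The only difference is that you explicitly justify the Fubini interchange via the bound $|\psi_{\textbf{u},b,a}(\textbf{x})|\le(2\pi)^{-n/2}|a|\,\|\psi\|_\infty$ and the seminorm decay of $\Phi$ (with $r>1$, $s>n$), a verification the paper leaves implicit -- your estimates are all correct, so this is a welcome addition rather than a deviation.
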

\begin{proof}
Using the Fubini's theorem, we have
\begin{align*}
\int_{\mathbb{R}^n}f(\textbf{x}){\overline{DS_{\psi}^{*}{\overline\Phi}}}(\textbf{x})d\textbf{x} &=\int_{\mathbb{R}^n}f(\textbf{x})d\textbf{x} \int_{\mathbb{S}^{n-1}}\int_{\mathbb{R}^{\times}} \int_{\mathbb{R}} \Phi(\textbf{u},b,a) \overline{\psi_{\textbf{u},b,a}(\textbf{x})}  |a|^{n-2} dbdad\textbf{u}\\
&=\int_{\mathbb{S}^{n-1}}\int_{\mathbb{R}^{\times}} \int_{\mathbb{R}} \Phi(\textbf{u},b,a) |a|^{n-2} dbdad\textbf{u} \int_{\mathbb{R}^n}f(\textbf{x})\overline{\psi_{\textbf{u},b,a}(\textbf{x})} d\textbf{x}\\
&=\int_{\mathbb{S}^{n-1}}\int_{\mathbb{R}^{\times}} \int_{\mathbb{R}} DS_\psi f(\textbf{u},b,a) \Phi(\textbf{u},b,a) |a|^{n-2} dbdad\textbf{u}.
\end{align*}
\end{proof}
Under the standard identification (\ref{standardidentification}), we write the last relation as
$$\big\langle f, \overline{DS_{\psi}^{*}{\overline\Phi}} \,\big\rangle = \langle  DS_{\psi}f, \Phi \rangle.$$
The last relation will be our model for defining the distributional directional Stockwell transform in Section 5.

%%%%%%%%%%%%%%%%%%%%%%%%%%%%%%%%%%%%%%%%%%%%%%%%%%%
\section{Continuity of the directional Stockwell transform on test function spaces}

Let $\mathcal{S}_1(\mathbb{R})=\lbrace \varphi \in \mathcal{S}(\mathbb{R}): e^{ix}\varphi(x) \in \mathcal{S}_0(\mathbb{R}) \rbrace$ and $\psi \in \mathcal{S}_1(\mathbb{R})$.
In the next two theorems we show the continuity of $DS_\psi:\mathcal{S}_0(\mathbb{R}^n)  \to \mathcal{S}(\mathbb{Y}^{n+1})$ and $DS_{\psi}^{*}: \mathcal{S}(\mathbb{Y}^{n+1}) \to \mathcal{S}_0(\mathbb{R}^n)$.

Notice that we can extend the definition of the directional Stockwell transform as a sesquilinear mapping $DS:(f,\psi)\to DS_{\psi}f $, whereas the directional Stockwell synthesis operator extends to the bilinear form 
$DS^{*}:(\Phi,\psi)\to DS_{\psi}^{*}\Phi $.

\begin{theorem}
The bilinear mapping 
$DS:\mathcal{S}_0(\mathbb{R}^n) \times \mathcal{S}_1(\mathbb{R}) \to \mathcal{S}(\mathbb{Y}^{n+1})$ defined as
$\left( f,\psi \right) \to DS_\psi f$
is continuous.
\end{theorem}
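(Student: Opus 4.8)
The plan is to bound each seminorm $\rho_{s,r}^{l,m,k}(DS_\psi f)$ directly from the Fourier representation of Proposition 3.2 and to show it is dominated by a product of a continuous seminorm of $f$ on $\mathcal{S}_0(\mathbb{R}^n)$ and a continuous seminorm of $\psi$ on $\mathcal{S}_1(\mathbb{R})$; an estimate of this shape simultaneously shows that $DS_\psi f\in\mathcal{S}(\mathbb{Y}^{n+1})$ and that the map $(f,\psi)\mapsto DS_\psi f$ is jointly continuous. Writing $g_{\textbf{u},a}(\xi)=\widehat{f}(\xi\textbf{u})\,\overline{\widehat{\psi}(\xi/a-1)}$, Proposition 3.2 presents $DS_\psi f(\textbf{u},b,a)$ as $e^{-iab}(2\pi)^{-n/2-1}$ times the Fourier transform in $\xi\mapsto b$ of $g_{\textbf{u},a}$. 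Since $\widehat f$ and $\widehat\psi$ are Schwartz, $g_{\textbf{u},a}$ is smooth and rapidly decaying in $\xi$, so differentiation under the integral is justified by dominated convergence and every operator $\partial_a^l\partial_b^m\Delta_{\textbf{u}}^k$ may be moved inside. Acting with $\Delta_{\textbf{u}}^k$ on $\widehat f(\xi\textbf{u})$ yields a finite sum of terms $\xi^{j}(\partial^\beta\widehat f)(\xi\textbf{u})$ with $j,|\beta|\le 2k$, and since derivatives of a Schwartz function that is flat at the origin are again Schwartz and flat at the origin, each term keeps the two properties exploited below.

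I would control the factor $(|a|^s+|a|^{-s})$ by separating the two regimes. For the term $|a|^{-s}$ (small $|a|$) I substitute $t=\xi/a$, after which the integrand carries $\widehat f(at\textbf{u})$ (and, after differentiation, its derivatives). Because $f\in\mathcal{S}_0(\mathbb{R}^n)$ forces $\widehat f$ to vanish to infinite order at the origin, $|\widehat f(at\textbf{u})|\lesssim|a|^{N}|t|^{N}$ for every $N$, the powers of $t$ being absorbed by the rapid decay of $\widehat\psi(t-1)$; the outcome is $O(|a|^N)$ as $a\to0$, dominating any $|a|^{-s}$. For the term $|a|^{s}$ (large $|a|$) I keep $\xi$ and split the integral into $|\xi|\le c|a|$ and $|\xi|>c|a|$: on the first region $\xi/a-1$ is close to $-1$, and since $\psi\in\mathcal{S}_1(\mathbb{R})$ makes $\widehat\psi$ flat at $-1$ one gets $|\widehat\psi(\xi/a-1)|\lesssim|a|^{-N}|\xi|^{N}$ (by Taylor expansion at $-1$, with constant controlled by a Schwartz seminorm of $\psi$); on the second region $|\xi|\gtrsim|a|\to\infty$ and the rapid decay of $\widehat f$ supplies the same $|a|^{-N}$. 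The two flatness conditions thus play complementary roles: $f\in\mathcal{S}_0$ governs $a\to0$ and $\psi\in\mathcal{S}_1$ governs $a\to\infty$. Finally, the weight $(1+|b|^2)^{r/2}$ is produced by integrating by parts in $\xi$: each power of $b$ is traded for a $\xi$-derivative falling on the smooth integrable integrand $g_{\textbf{u},a}$, which preserves the flatness and decay used above.

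The hard part will not be any single estimate but the uniform bookkeeping needed to run them all at once. The mixed derivatives $\partial_a^l\partial_b^m$ generate extra algebraic factors — powers of $b$, of $\xi/a^2$ (equivalently $t/a$ after the substitution) and of $(t-1)$ — and one must verify that every such factor is reabsorbed by the $\mathcal{S}_0$-flatness of $\widehat f$, the $\mathcal{S}_1$-flatness of $\widehat\psi$, or the Schwartz decay, with constants depending only on finitely many seminorms $\rho_{M}(f)$ and $\rho_{M'}(\psi)$; in particular the dangerous negative powers $|a|^{-m}$ arising from differentiating $\widehat\psi(\xi/a-1)$ are defeated, as $a\to0$, by the arbitrarily high power $|a|^{N}$ coming from the flatness of $\widehat f$. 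Keeping the two-region split compatible with the $b$-integrations by parts and with the $\Delta_{\textbf{u}}^k$ expansion, and checking uniformity in $\textbf{u}\in\mathbb{S}^{n-1}$ (harmless, since $\widehat f$ and its derivatives are bounded uniformly on spheres), is what requires the most care. Assembling these yields, for each $(s,r,l,m,k)$, an estimate $\rho_{s,r}^{l,m,k}(DS_\psi f)\lesssim\rho_{M}(f)\,\rho_{M'}(\psi)$ with $M,M'$ depending only on $(s,r,l,m,k)$, which is exactly the asserted joint continuity.
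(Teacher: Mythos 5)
Your proposal is correct, and its analytic core coincides with the paper's: both work from the Fourier-side representation of Proposition 3.2, trade the weight $(1+|b|^2)^{r/2}$ for $\xi$-derivatives by letting $(1-\partial_\xi^2)^{r/2}$ act on $e^{i(\xi-a)b}$ and integrating by parts, and use exactly your two complementary flatness mechanisms for the $a$-weights: Taylor expansion of $\widehat f$ at the origin after the substitution $\xi=at$ controls $|a|^{-s}$ as $a\to 0$, while Taylor expansion of $\widehat\psi$ at $-1$ controls $|a|^{s}$ as $|a|\to\infty$. Two points of difference. First, your two-region split $|\xi|\le c|a|$ versus $|\xi|>c|a|$ for large $|a|$ is unnecessary: since all derivatives of $\widehat\psi(\cdot-1)$ up to order $s-1$ vanish at $0$ (this is precisely $\psi\in\mathcal S_1$) and $\widehat\psi^{(s)}$ is globally bounded, the Lagrange form of Taylor's theorem gives $|\widehat\psi(\xi/a-1)|\lesssim |\xi/a|^{s}$ for \emph{all} $\xi$, which is how the paper handles that regime in one stroke. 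Second, and this is where you genuinely diverge in organization: the paper does not differentiate under the integral in the Fourier representation. Its Steps 1--2 carry out the $\partial_a^l\partial_b^m$ and $\Delta_{\mathbf u}^k$ reductions on the spatial-side integral, rewriting each derivative of $DS_\psi f$ as a finite sum of transforms $DS_{\psi_j}f_\alpha$ of modified pairs (with $\psi_j$ a derivative or monomial multiple of $\psi$, still in $\mathcal S_1$, and $f_\alpha=\mathbf x^\alpha f$, still in $\mathcal S_0$), at the cost of shifting the seminorm indices $(s,r)$; this converts the ``uniform bookkeeping'' you rightly identify as the hard part into a clean stepwise reduction to the case $l=m=k=r=0$, with explicit final indices $\nu,\tau$. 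Your all-at-once Fourier-side bookkeeping does close as well, because flatness at $0$ and at $-1$ and Schwartz decay are stable under all the polynomial factors and derivatives you enumerate (and your observation that only $\widehat f(\xi\mathbf u)$ carries the $\mathbf u$-dependence actually makes the $\Delta_{\mathbf u}^k$ step slightly cleaner than the paper's spatial-side expansion); what the paper's reduction scheme buys is transparency in tracking that every constant depends on only finitely many seminorms of $f$ and $\psi$.
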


\begin{proof}
To prove the theorem is enough to show that for given $l,m,k,r,s\in \mathbb{N}_0$, there exist $\nu,\tau \in \mathbb{N}_0$ and $C>0$ such that
\begin{equation}\label{th4.1,10}\rho_{s,r}^{l,m,k}(DS_\psi f) \leq C\rho_{\nu}(f) \rho_{\tau}(\psi),  \enspace f\in \mathcal{S}_0(\mathbb{R}^n),\ \psi\in \mathcal{S}_1(\mathbb{R}).\end{equation}
We may suppose that $r$ is even and $s\geq 1.$\\
\textbf{Step 1.} By the definition of the directional Stockwell transform and the Leibniz’s formula, we obtain
    \begin{align*}
    &\,\quad \left|\frac{\partial^l}{\partial a^l}\frac{\partial^m}{\partial b^m} DS_\psi f(\textbf{u},b,a)\right|\\
    &=\frac{1}{(2 \pi)^{n/2}} \left| \frac{\partial^l}{\partial a^l}\frac{\partial^m}{\partial b^m} \int_{\mathbb{R}^n} af(\textbf{x}) \overline{\psi(a(\textbf{x} \cdot \textbf{u}-b))} e^{-ia(\textbf{x} \cdot \textbf{u})} d\textbf{x} \right| \\
    &=\frac{1}{(2 \pi)^{n/2}} \left| \frac{\partial^l}{\partial a^l} \int_{\mathbb{R}^n} a^{m+1} f(\textbf{x}) \overline{\psi^{(m)}(a(\textbf{x} \cdot \textbf{u}-b))} e^{-ia(\textbf{x} \cdot \textbf{u})} d\textbf{x} \right| \\
    &=\frac{1}{(2 \pi)^{n/2}} \bigg| \int_{\mathbb{R}^n}  f(\textbf{x}) \sum_{k_1+k_2+k_3=l} \binom{l}{k_1,k_2,k_3} \binom{m+1}{k_1} a^{m+1-k_1} \\
    & \times e^{-ia(\textbf{x} \cdot \textbf{u})} (-i(\textbf{u} \cdot \textbf{x}))^{k_2} \overline{\psi^{(m+k_3)}(a(\textbf{x} \cdot \textbf{u}-b))} (\textbf{u} \cdot \textbf{x}-b)^{k_3}  d\textbf{x} \bigg| \\
&\lesssim \frac{1}{(2 \pi)^{n/2}} \sum_{k_1+k_2+k_3=l} \sum_{|\alpha|=k_2} \sum_{|\beta|\leq k_3} \bigg| \int_{\mathbb{R}^n} f(\textbf{x}) a^{m+1-k_1} e^{-ia(\textbf{x} \cdot \textbf{u})} \textbf{x}^{\alpha} \textbf{u}^{\alpha} \end{align*}
\begin{align*}
& \times\overline{\psi^{(m+k_3)}(a(\textbf{x} \cdot \textbf{u}-b))} \textbf{x}^{\beta} \textbf{u}^{\beta} b^{k_3-|\beta|}  d\textbf{x} \bigg|\\
&\leq \frac{1}{(2 \pi)^{n/2}} \sum_{k_1+k_2+k_3=l} \sum_{|\alpha|=k_2} \sum_{|\beta|\leq k_3} \left( |a|^{m}+|a|^{-m} \right) \left( 1+|b|^2 \right)^{\frac{k_3-|\beta|}{2}}  \\
& \times\bigg|  \int_{\mathbb{R}^n} a f(\textbf{x})  e^{-ia(\textbf{x} \cdot \textbf{u})} \textbf{x}^{\alpha+\beta}  \overline{\psi^{(m+k_3)}(a(\textbf{x} \cdot \textbf{u}-b))} d\textbf{x} \bigg|\\
& \leq \left( |a|^{m}+|a|^{-m} \right) \left( 1+|b|^2 \right)^{l/2} \sum_{k_1+k_2+k_3=l} \sum_{|\alpha|=k_2} \sum_{|\beta|\leq k_3} \left| DS_{\psi_{m+k_3}} f_{\alpha+\beta}(\textbf{u},b,a) \right|,
\end{align*}
where $\psi_{m+k_3}(x)=\psi^{(m+k_3)}(x) \in \mathcal{S}_1 (\mathbb{R}),$ and $f_{\alpha+\beta}(\textbf{x})=\textbf{x}^{\alpha+\beta}f(\textbf{x}) \in \mathcal{S}_0 (\mathbb{R}^n) $. Then
$$\rho_{s,r}^{l,m,k}(DS_\psi f) \lesssim \sum_{k_1+k_2+k_3=l} \sum_{|\alpha|=k_2} \sum_{|\beta|\leq k_3} {\rho_{m+s,l+r}^{0,0,k}(DS_{\psi_{m+k_3}} f_{\alpha+\beta})} .$$
So we can assume that $m=l=0$.\\
\textbf{Step 2.} First, we have
\begin{align*}
   &\quad\, \left| \Delta_{\textbf{u}}^{k} DS_\psi f(\textbf{u},b,a) \right| =\frac{|a|}{(2 \pi)^{n/2}} \left| 
\Delta_{\textbf{u}}^{k} \left( \int_{\mathbb{R}^n} f(\textbf{x}) \overline{\psi(a(\textbf{x} \cdot \textbf{u}-b))} e^{-ia(\textbf{x} \cdot \textbf{u})} d\textbf{x}  \right) \right| \\
&=\frac{|a|}{(2 \pi)^{n/2}} \left| \sum_{|\alpha|,j,d \leq 2k} P_{\alpha,j,d}(\textbf{u}) \int_{\mathbb{R}^n} a^d \textbf{x}^{\alpha} f(\textbf{x}) \overline{\psi^{(j)}(a(\textbf{x} \cdot \textbf{u}-b))} e^{-ia(\textbf{x} \cdot \textbf{u})} d\textbf{x} \right| \\
&\lesssim \left( |a|^{2k}+|a|^{-2k}\right) \sum_{|\alpha|,j \leq 2k} \frac{|a|}{(2 \pi)^{n/2}} \left| \int_{\mathbb{R}^n}  \textbf{x}^{\alpha} f(\textbf{x}) \overline{\psi^{(j)}(a(\textbf{x} \cdot \textbf{u}-b))} e^{-ia(\textbf{x} \cdot \textbf{u})} d\textbf{x} \right|,
\end{align*}
where the $P_{\alpha,j,d}(\textbf{u})$ are certain polynomials.
If we put $f_{\alpha}(\textbf{x})=\textbf{x}^{\alpha}f(\textbf{x}), \psi_j (x)=\psi^{(j)}(x)$, we obtain 
$$  \left| \Delta_{\textbf{u}}^{k} DS_\psi f(\textbf{u},b,a) \right| \lesssim \left( |a|^{2k}+|a|^{-2k}\right) \sum_{|\alpha|,j \leq 2k} \left| DS_{\psi_j} f_{\alpha} (\textbf{u},b,a) \right|,$$
that yields
$$\rho_{s,r}^{0,0,k}(DS_\psi f) \lesssim \sum_{|\alpha|,j \leq 2k} {\rho_{2k+s,r}^{0,0,0}(DS_{\psi_{j}} f_{\alpha})}.$$
Since $\psi_j \in \mathcal{S}_1 (\mathbb{R}), f_{\alpha} \in \mathcal{S}_0 (\mathbb{R}^n) $, we can suppose that $k=0$.\\
\textbf{Step 3.} Using Proposition 3.2, we have 
\begin{align*}
    &\,\quad\left( 1+|b|^2 \right)^{r/2} \left| DS_\psi f(\textbf{u},b,a)\right|\\
    &=\left( 1+|b|^2 \right)^{r/2} \left| \frac{1}{(2 \pi)^{n/2+1}} \int_{\mathbb{R}} \widehat{f}(\xi \textbf{u}) \overline{\widehat{\psi} \left( \frac{\xi}{a}-1\right)}  e^{i(\xi-a)b} d \xi \right| \end{align*}
\begin{align*}
    &= \frac{1}{(2 \pi)^{n/2+1}}\left|  \int_{\mathbb{R}} \widehat{f}(\xi \textbf{u}) \overline{\widehat{\psi} \left( \frac{\xi}{a}-1\right)}  \left( 1-\frac{\partial^2}{\partial \xi ^2} \right)^{r/2} \left( e^{i(\xi-a)b} \right) d \xi  \right| \\
    &=  \frac{1}{(2 \pi)^{n/2+1}}\left| \int_{\mathbb{R}} \left( 1-\frac{\partial^2}{\partial \xi ^2} \right)^{r/2} \left( \widehat{f}(\xi \textbf{u}) \overline{\widehat{\psi} \left( \frac{\xi}{a}-1\right)} \right) e^{i(\xi-a)b} d \xi  \right| \\
    & =  \frac{1}{(2 \pi)^{n/2+1}}\left| \sum_{|\alpha|,j \leq r} a^{-j} Q_{\alpha,j}(\textbf{u}) \int_{\mathbb{R}} e^{i(\xi-a)b} \widehat{f}^{(\alpha)}(\xi \textbf{u}) \overline{\widehat{\psi}^{(j)} \left( \frac{\xi}{a}-1\right)}   d \xi  \right| \\
    & \lesssim \left( |a|^{r}+|a|^{-r} \right) \sum_{|\alpha|,j \leq r} \frac{1}{(2 \pi)^{n/2+1}}  \left| \int_{\mathbb{R}} e^{i(\xi-a)b} \widehat{\left(\textbf{x}^{\alpha} f(\textbf{x})\right)}  (\xi \textbf{u}) \overline{\widehat{\left( x^j \psi(x)\right)} \left( \frac{\xi}{a}-1 \right)} d \xi \right|,
     \end{align*}
for some polynomials $Q_{\alpha,j}(\textbf{u})$.
If we put $f_{\alpha}(\textbf{x})=\textbf{x}^{\alpha}f(\textbf{x}), \psi_j(x)=x^j \psi(x)$, then by Proposition 3.2 we obtain
\begin{align*}
    &\quad\,\left( 1+|b|^2 \right)^{r/2} \left| DS_\psi f(\textbf{u},b,a)\right|\\
    & \lesssim \left( |a|^{r}+|a|^{-r} \right) \sum_{|\alpha|,j \leq r} \frac{1}{(2 \pi)^{n/2+1}}  \left| \int_{\mathbb{R}} e^{i(\xi-a)b} \widehat{f_{\alpha} }(\xi \textbf{u}) \overline{\widehat{\psi_j} \left( \frac{\xi}{a}-1 \right)} d \xi \right|\\
    &=\left( |a|^{r}+|a|^{-r} \right) \sum_{|\alpha|,j \leq r} \left| DS_{\psi_j} f_{\alpha}(\textbf{u},b,a)\right|.
\end{align*}
So we conclude that
$$\rho_{s,r}^{0,0,0}(DS_\psi f) \lesssim \sum_{|\alpha|,j \leq r} {\rho_{r+s,0}^{0,0,0}(DS_{\psi_{j}} f_{\alpha})}.$$
Since $\psi_j \in \mathcal{S}_1 (\mathbb{R}), f_{\alpha} \in \mathcal{S}_0 (\mathbb{R}^n)$ we can suppose that $r=0$.\\
\textbf{Step 4.}
Since $f\in \mathcal{S}_0 (\mathbb{R}^n) $, using the Taylor expansion
of $\hat f$, we have 
\begin{align*}
   &\,\quad \frac{1}{|a|^s} \left| DS_\psi f(\textbf{u},b,a) \right|= \frac{1}{|a|^s (2 \pi)^{n/2+1}} \left| \int_{\mathbb{R}} e^{i(\xi-a)b} \widehat{f }(\xi \textbf{u}) \overline{\widehat{\psi} \left( \frac{\xi}{a}-1 \right)} d \xi \right| \\
    & =\frac{1}{|a|^{s-1} (2 \pi)^{n/2+1}} \left| \int_{\mathbb{R}} e^{ia(\xi-1)b} \widehat{f }(a \xi \textbf{u}) \overline{\widehat{\psi} \left( \xi-1 \right)} d \xi \right| \end{align*}
\begin{align*}
    & \leq \frac{1}{|a|^{s-1}} \sum_{|\alpha|=s-1} \frac{1}{(2 \pi)^{n/2+1}} \left| \int_{\mathbb{R}} e^{ia(\xi-1)b} \frac{(a \xi \textbf{u})^{\alpha}}{\alpha !}\widehat{f}^{(\alpha)}(a \xi_0 \textbf{u}) \overline{\widehat{\psi} \left( \xi-1 \right)} d \xi \right| \\
    & \lesssim \sum_{|\alpha|=s-1} \left| \int_{\mathbb{R}} e^{ia(\xi-1)b} 
\xi^{s-1} \widehat{f}^{(\alpha)}(a \xi_0 \textbf{u}) \overline{\widehat{\psi} \left( \xi-1 \right)} d \xi \right| \\
& \lesssim  \sum_{|\alpha|=s-1} \int_{\mathbb{R}^n} \left| \textbf{x}^{\alpha} f(\textbf{x}) \right| d\textbf{x} \int_{\mathbb{R}} \left| \xi \right|^{s-1} \left| \widehat{\psi}(\xi-1) \right| d \xi \lesssim \rho_{s+n}(f) \rho_{s+1}(\psi).
\end{align*}
Since $\psi\in \mathcal{S}_1 (\mathbb{R}) $, we have
\begin{align*}
   &\,\quad |a|^s \left| DS_\psi f(\textbf{u},b,a) \right|=\frac{|a|^s}{ (2 \pi)^{n/2+1}} \left| \int_{\mathbb{R}} e^{i(\xi-a)b} \widehat{f }(\xi \textbf{u}) \overline{\widehat{\psi} \left( \frac{\xi}{a}-1 \right)} d \xi \right| \\
    &=\frac{|a|^s}{ (2 \pi)^{n/2+1}} \left| \int_{\mathbb{R}} e^{i(\xi-a)b} \widehat{f }(\xi \textbf{u}) \left( \frac{\xi}{a} \right)^s \frac{1}{s!}\overline{\widehat{\psi}^{(s)} \left( \frac{\xi_0}{a}-1 \right)} d \xi \right| \\
    & \lesssim \int_{\mathbb{R}} \left| \eta^s \psi(\eta) \right| d\eta \int_{\mathbb{R}} \left| \xi \right|^{s} \left| \widehat{f}(\xi \textbf{u}) \right| d \xi \lesssim \rho_{s+n+1}(f) \rho_{s+2}(\psi).
\end{align*}
Then, we obtain
$$\rho_{s,0}^{0,0,0}(DS_\psi f) \lesssim \rho_{s+n+1}(f) \rho_{s+2}(\psi).$$

Summing up all the estimates, we find that (\ref{th4.1,10}) holds true for $\nu=4k+4l+3r+m+s+n+1$ and $\tau=4k+4l+3r+2m+s+2$.
\end{proof}

We now analyze the directional Stockwell synthesis operator.

\begin{theorem}
The bilinear mapping 
$DS^{*}: \mathcal{S}(\mathbb{Y}^{n+1}) \times \mathcal{S}_1(\mathbb{R}) \to \mathcal{S}_0(\mathbb{R}^n)$ defined as 
$(\Phi,\psi) \to DS_{\psi}^{*}\Phi$
is continuous.
\end{theorem}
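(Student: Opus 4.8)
The goal is, for each $N\in\mathbb{N}_0$, to produce indices $l,m,k,s,r,\tau$ and a constant so that $\rho_N(DS_\psi^{*}\Phi)\le C\,\rho_{s,r}^{l,m,k}(\Phi)\,\rho_\tau(\psi)$, and in addition to check that $DS_\psi^{*}\Phi$ lands in the Lizorkin subspace $\mathcal{S}_0(\mathbb{R}^n)$ rather than merely in $\mathcal{S}(\mathbb{R}^n)$. I would begin exactly as in Theorem 4.1. Since $\Phi$ decays rapidly in $b$ and in both $|a|$ and $|a|^{-1}$ while $\textbf{x}\mapsto\psi_{\textbf{u},b,a}(\textbf{x})$ is smooth, dominated convergence justifies differentiation under the integral. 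Because $\partial_{x_i}\big(\psi(a(\textbf{x}\cdot\textbf{u}-b))e^{ia(\textbf{x}\cdot\textbf{u})}\big)=a u_i\big(\psi'+i\psi\big)(a(\textbf{x}\cdot\textbf{u}-b))e^{ia(\textbf{x}\cdot\textbf{u})}$, a derivative $\partial_\textbf{x}^\gamma$ only replaces $\psi$ by a fixed linear combination of its derivatives (still in $\mathcal{S}_1(\mathbb{R})$, since $\mathcal{S}_1$ is stable under differentiation and under multiplication by polynomials) and inserts a factor $a^{|\gamma|}\textbf{u}^\gamma$ with $|\textbf{u}^\gamma|\le 1$, which may be absorbed into $\Phi$ by passing to $a^{|\gamma|}\textbf{u}^\gamma\Phi\in\mathcal{S}(\mathbb{Y}^{n+1})$. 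Hence it suffices to bound $\sup_{\textbf{x}}(1+|\textbf{x}|)^N\,|DS_\psi^{*}\Phi(\textbf{x})|$ for arbitrary $\psi\in\mathcal{S}_1(\mathbb{R})$ and $\Phi\in\mathcal{S}(\mathbb{Y}^{n+1})$.

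The difficulty is that $\psi_{\textbf{u},b,a}$ is a ridge function with no decay in the directions orthogonal to $\textbf{u}$, so the spatial decay of $DS_\psi^{*}\Phi$ cannot come from the window and must be manufactured by the integration over $\mathbb{S}^{n-1}$. I would extract it on the Fourier side, deriving the synthesis analogue of Proposition 3.2. Writing $\psi_{\textbf{u},b,a}(\textbf{x})=(2\pi)^{-(n-1)/2}\psi^{\flat}_{b,a}(\textbf{x}\cdot\textbf{u})$ with $\psi^{\flat}_{b,a}(p)=\frac{|a|}{\sqrt{2\pi}}\psi(a(p-b))e^{iap}$ exhibits $DS_\psi^{*}\Phi=(2\pi)^{-(n-1)/2}R^{*}\Psi$ as the dual Radon transform of $\Psi(\textbf{u},p)=\int_{\mathbb{R}^{\times}}\int_{\mathbb{R}}\Phi(\textbf{u},b,a)\,\psi^{\flat}_{b,a}(p)\,|a|^{n-2}\,db\,da$. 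Computing the one-dimensional Fourier transform $\Theta(\textbf{u},s)=\int_{\mathbb{R}}\Psi(\textbf{u},p)e^{-ips}\,dp$ and passing to polar coordinates $\textbf{w}=s\textbf{u}$ gives
\begin{equation*}
\widehat{DS_\psi^{*}\Phi}(\textbf{w})=\frac{c_n}{|\textbf{w}|^{\,n-1}}\Big[\Theta\big(\tfrac{\textbf{w}}{|\textbf{w}|},|\textbf{w}|\big)+\Theta\big(-\tfrac{\textbf{w}}{|\textbf{w}|},-|\textbf{w}|\big)\Big],
\qquad
\Theta(\textbf{u},s)=\tfrac{1}{\sqrt{2\pi}}\int_{\mathbb{R}^{\times}}\!\int_{\mathbb{R}}\Phi(\textbf{u},b,a)\,|a|^{n-2}\,e^{-ib(s-a)}\,\widehat{\psi}\big(\tfrac{s}{a}-1\big)\,db\,da .
\end{equation*}
From this formula $\widehat{DS_\psi^{*}\Phi}$ is smooth for $\textbf{w}\neq 0$, and the rapid decay of $\widehat{\psi}$ together with the $|a|^{\pm s}$- and $(1+|b|^2)^{r/2}$-control of $\Phi$ make $\Theta(\textbf{u},s)$ Schwartz in $s$ uniformly in $\textbf{u}$, with all derivatives dominated by finitely many seminorms $\rho_{s,r}^{l,m,k}(\Phi)\,\rho_\tau(\psi)$; this yields the rapid decay of $\widehat{DS_\psi^{*}\Phi}$ and of its $\textbf{w}$-derivatives as $|\textbf{w}|\to\infty$.

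It remains to control the behaviour at the Fourier origin, which is the main obstacle and is simultaneously the point where $\psi\in\mathcal{S}_1(\mathbb{R})$ and membership in $\mathcal{S}_0(\mathbb{R}^n)$ enter. The prefactor $|\textbf{w}|^{-(n-1)}$ and the non-smooth map $\textbf{w}\mapsto\textbf{w}/|\textbf{w}|$ are singular at $\textbf{w}=0$, and they are tamed by the flat vanishing of $\Theta$ in $s$ there: since $\psi\in\mathcal{S}_1(\mathbb{R})$ is equivalent to $\widehat{\psi}^{(k)}(-1)=0$ for all $k$, differentiating $\Theta(\textbf{u},s)$ in $s$ and setting $s=0$ annihilates every term, so $\Theta(\textbf{u},s)=O(s^\infty)$ uniformly in $\textbf{u}$ (by Taylor's formula, with the $a$-integral controlled by the decay of $\Phi$). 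Consequently $\widehat{DS_\psi^{*}\Phi}$ extends to a $C^\infty$ function across the origin all of whose derivatives vanish at $0$, which is exactly the Lizorkin condition $\widehat{DS_\psi^{*}\Phi}\in\mathcal{S}(\mathbb{R}^n)$ with $\partial^\beta\widehat{DS_\psi^{*}\Phi}(0)=0$ for every $\beta$; equivalently $DS_\psi^{*}\Phi\in\mathcal{S}_0(\mathbb{R}^n)$. Tracking the seminorms through the three reductions and using that the Fourier transform is continuous on $\mathcal{S}(\mathbb{R}^n)$ then gives $\rho_N(DS_\psi^{*}\Phi)\le C\,\rho_{s,r}^{l,m,k}(\Phi)\,\rho_\tau(\psi)$ for suitable indices, which is the asserted joint continuity. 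A purely real-variable alternative for the spatial decay is to integrate by parts in $\textbf{u}$ via $-\Delta_\textbf{u}e^{ia(\textbf{x}\cdot\textbf{u})}=\big(a^2(|\textbf{x}|^2-(\textbf{x}\cdot\textbf{u})^2)+ia(n-1)(\textbf{x}\cdot\textbf{u})\big)e^{ia(\textbf{x}\cdot\textbf{u})}$ on the part of $\mathbb{S}^{n-1}$ away from $\pm\textbf{x}/|\textbf{x}|$, where $|\textbf{x}|^2-(\textbf{x}\cdot\textbf{u})^2\gtrsim|\textbf{x}|^2$, while on the polar caps $\textbf{x}\cdot\textbf{u}\approx\pm|\textbf{x}|$ forces the window to localise $b$ near $\pm|\textbf{x}|$ and the decay is supplied by the $(1+|b|^2)^{r/2}$ factor of $\Phi$; these two complementary mechanisms are what the Fourier computation packages cleanly.
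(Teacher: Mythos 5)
Your proof follows essentially the same route as the paper's: both compute the Fourier transform of $h=DS_{\psi}^{*}\Phi$ in polar coordinates (your $\Theta(\textbf{u},s)$ is, up to the substitution $t=s/a$, exactly the paper's function $G(\textbf{u},w)$ with $\widehat{h}(w\textbf{u})=(2\pi)^{n/2-1}G(\textbf{u},w)$), establish Schwartz-type estimates on $\mathbb{S}^{n-1}\times\mathbb{R}$ bounded by seminorms of $\Phi$ and $\psi$, and then verify infinite-order vanishing of $\widehat{h}$ at the origin to conclude membership in $\mathcal{S}_0(\mathbb{R}^n)$. The only cosmetic difference is your mechanism for the flatness at $\textbf{w}=0$ --- Taylor expansion in $s$ using $\widehat{\psi}^{(k)}(-1)=0$ --- whereas the paper uses the equivalent facts $|\widehat{\Phi}(\textbf{u},\omega,a)|\le C_k|a|^{k+1}$ and $\int_{\mathbb{R}}|\widehat{\psi}(t-1)|\,|t|^{-n-k-1}\,dt<\infty$.
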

\begin{proof}
Since $\Phi \in \mathcal{S}(\mathbb{Y}^{n+1}) $  and $\psi \in \mathcal{S}_1(\mathbb{R}) $ then $DS_{\psi}^{*}\Phi \in \mathbb{C}^{\infty}(\mathbb{R}^n).$ 
Using the Parseval's identity and the Fubini's theorem, we can write
\begin{align*}
h(\textbf{x})&:=DS_{\psi}^{*}\Phi(\textbf{x})=\int_{\mathbb{S}^{n-1}}\int_{\mathbb{R}^{\times}} \int_{\mathbb{R}} \Phi(\textbf{u},b,a) \psi_{\textbf{u},b,a}(\textbf{x}) |a|^{n-2} dbdad\textbf{u} \\
    &=\frac{1}{2\pi} \int_{\mathbb{S}^{n-1}} d\textbf{u} \int_{\mathbb{R}^{\times}} |a|^{n-2} da \int_{\mathbb{R}} \widehat{\Phi}(\textbf{u},w,a) \overline{{\widehat{\left( \overline{\psi_{\textbf{u},\cdot,a}(\textbf{x})} \right)}}  (w)} dw\\
    &=\frac{1}{(2\pi)^{n/2+1}} \int_{\mathbb{S}^{n-1}} d\textbf{u} \int_{\mathbb{R}^{\times}} |a|^{n-2} da \int_{\mathbb{R}} \widehat{\Phi}(\textbf{u},w,a) e^{i(a+w)\textbf{x} \cdot \textbf{u}} \widehat{\psi}\left( \frac{w}{a} \right) dw\\
    &=\frac{1}{(2\pi)^{n/2+1}} \int_{\mathbb{S}^{n-1}} d\textbf{u} \int_{\mathbb{R}^{\times}} |a|^{n-2} da \int_{\mathbb{R}} \widehat{\Phi}(\textbf{u},w-a,a) e^{iw(\textbf{x}\cdot \textbf{u}) } \widehat{\psi}\left( \frac{w}{a} -1 \right) dw\\
    &=\frac{1}{(2\pi)^{n/2+1}} \int_{\mathbb{R}} dw \int_{\mathbb{S}^{n-1}} e^{iw(\textbf{x}\cdot \textbf{u}) } d\textbf{u} \int_{\mathbb{R}^{\times}} |a|^{n-2}   \widehat{\Phi}(\textbf{u},w-a,a)  \widehat{\psi}\left( \frac{w}{a} -1 \right) da\\
    &=\frac{1}{(2\pi)^{n/2+1}} \int_{0}^{\infty} w^{n-1} dw \int_{\mathbb{S}^{n-1}} e^{iw(\textbf{x}\cdot \textbf{u}) } d\textbf{u} \int_{\mathbb{R}} \bigg(  \widehat{\Phi}(\textbf{u},w-\frac{w}{t},\frac{w}{t}) \\
    &+ \widehat{\Phi}(-\textbf{u},-w+\frac{w}{t},-\frac{w}{t}) \bigg) \widehat{\psi}\left( t -1 \right)  \frac{dt}{|t|^n},
\end{align*}
 where $\widehat{\Phi}$ stands for the Fourier transform of $\Phi(\textbf{u},b,a)$ with respect to the variable $b$.
 
Now we define
$$G(\textbf{u}\textbf{},w):=\int_{\mathbb{R}} \Big(  \widehat{\Phi}(\textbf{u},w-\frac{w}{t},\frac{w}{t}) + \widehat{\Phi}(-\textbf{u},-w+\frac{w}{t},-\frac{w}{t}) \Big) \widehat{\psi}\left( t -1 \right)  \frac{dt}{|t|^n}, $$
$w \in \mathbb{R}, \textbf{u}\in \mathbb{S}^{n-1}.$
Clearly $G$ is a smooth function and
\begin{equation} \label{6}
    h(\textbf{x})=\frac{1}{(2\pi)^{n/2+1}} \int_{0}^{\infty} w^{n-1} dw \int_{\mathbb{S}^{n-1}} G(\textbf{u},w) e^{iw(\textbf{x}\cdot \textbf{u}) } d\textbf{u}.
\end{equation}
We will show that $G\in \mathcal{S}(\mathbb{S}^{n-1} \times \mathbb{R})$. For given $N,q,k \in \mathbb{N}_0$, it holds
\begin{align*}
    &\,\quad\sup_{(\textbf{u},w)\in \mathbb{S}^{n-1} \times \mathbb{R}} \left| w^N \frac{\partial^q}{\partial w^q} \Delta_\textbf{u}^{k} \left( \int_{\mathbb{R}}  \widehat{\Phi}(\textbf{u},w-\frac{w}{t},\frac{w}{t}) \widehat{\psi}\left( t -1 \right)  \frac{dt}{|t|^n}\right) \right|\\
    &=\sup_{(\textbf{u},w)\in \mathbb{S}^{n-1} \times \mathbb{R}} \left| w^N    \int_{\mathbb{R}}  \frac{\partial^q}{\partial w^q} \Delta_\textbf{u} ^{k} \widehat{\Phi}(\textbf{u},w-\frac{w}{t},\frac{w}{t}) \widehat{\psi}\left( t -1 \right)  \frac{dt}{|t|^n} \right|\\
    &=\sup_{(\textbf{u},w)\in \mathbb{S}^{n-1} \times \mathbb{R}} \left | w^N  \int_{\mathbb{R}} \widehat{\psi}(t-1) \frac{dt}{|t|^n} \int_{\mathbb{R}} \sum_{l_1+l_2=q} \binom{q}{l_1,l_2} \frac{\partial^{l_1}}{\partial w ^{l_1}} \left(  \Delta_\textbf{u} ^{k} \Phi(\textbf{u},b,\frac{w}{t})\right) \right .\\
    &\quad\times\left . \frac{\partial^{l_2}}{\partial w ^{l_2}} \left( e^{-ib(w-\frac{w}{t})} \right)
  db  \right| \\
    &=\sup_{(\textbf{u},w)\in \mathbb{S}^{n-1} \times \mathbb{R}} \left | w^N      \sum_{l_1+l_2=q} \binom{q}{l_1,l_2} \int_{\mathbb{R}} \left( 1-\frac{1}{t} \right) ^{l_2} \frac{1}{t^{l_1}} \widehat{\psi}(t-1) \frac{dt}{|t|^n}\right.\\
    &\quad \int_{\mathbb{R}} 
 b^{l_2} \left.\frac{\partial^{l_1} \Delta_\textbf{u} ^{k} \Phi(\textbf{u},b,\frac{w}{t})}{\partial \left( \frac{w}{t} \right) ^{l_1}}    e^{-iw(1-\frac{1}{t})b} db  \right| \lesssim \rho_{r'}(\widehat{\psi}) \rho_{v_1',v_2'}^{q_1',q_2',k}(\Phi) ,
\end{align*}
for some $q_1',q_2',r',v_1',v_2' \in \mathbb{N}_0$. Similar, we have 
$$ \sup_{(\textbf{u},w)\in \mathbb{S}^{n-1} \times \mathbb{R}} \left| w^N \frac{\partial^q}{\partial w^q} \Delta_\textbf{u} ^{k} \left( \int_{\mathbb{R}}  \widehat{\Phi}(-\textbf{u},-w+\frac{w}{t},-\frac{w}{t}) \widehat{\psi}\left( t -1 \right)  \frac{dt}{|t|^n}\right) \right| \lesssim \rho_{r''}(\widehat{\psi}) \rho_{v_1'',v_2''}^{q_1'',q_2'',k}(\Phi),$$ 
for some $q_1'',q_2'',r'',v_1'',v_2'' \in \mathbb{N}_0$.
So, we have shown that for given $N,q,k \in \mathbb{N}_0$, 
\begin{equation} \label{***} \sup_{(\textbf{u},w)\in \mathbb{S}^{n-1} \times \mathbb{R}} \left| w^N \frac{\partial^q}{\partial w^q} \Delta_\textbf{u} ^{k} G(\textbf{u},w) \right| \lesssim \rho_{r}(\widehat{\psi}) \rho_{v_1,v_2}^{q_1,q_2,k}(\Phi),\end{equation}
for some $q_1,q_2,r,v_1,v_2 \in \mathbb{N}_0$.
 
Then, by relation (\ref{6}) and the Fourier inversion formula in polar coordinates, we obtain $h\in \mathcal{S}(\mathbb{R}^n)$ and 
\begin{equation}\label{****}\widehat{h}(w\textbf{u})=(2 \pi)^{n/2-1}G(\textbf{u},w),\end{equation}
for $w\in \mathbb{R}^{+}, \textbf{u}\in \mathbb{S}^{n-1} $. 
To show that $h\in \mathcal{S}_0 (\mathbb{R}^n)$ is equivalent of showing that 
$$\lim_{\textbf{w} \to 0} \frac{\widehat{h}(\textbf{w})}{|\textbf{w}|^k}=0,  \enspace \forall k\in \mathbb{N}_0.$$
Since $\Phi\in \mathcal{S}(\mathbb{Y}^{n+1})$, there exists $C_k>0$ such that 
$$|\widehat{\Phi}(\textbf{u},\omega,a)| \leq C_k |a|^{k+1},$$
uniformly for $\omega\in \mathbb{R}, \textbf{u} \in \mathbb{S}^{n-1}.$
Then, for $w\in \mathbb{R}^{+}$ and $\textbf{u}\in \mathbb{S}^{n-1},$  
$$|\widehat{h}(w\textbf{u})| \lesssim C_k \int_{\mathbb{R}} \left| \frac{w}{t} \right|^{k+1} |\widehat{\psi}(t-1)| \frac{dt}{|t|^{n}}. $$

Since  $\psi\in \mathcal{S}_1 (\mathbb{R})$ we have that $\int_{\mathbb{R}} |\widehat{\psi}(t-1)| \frac{dt}{|t|^{n+k+1}} < \infty$. Then 
$\displaystyle\lim_{w\to 0^+} \frac{\widehat{h}(w\textbf{u})}{|w|^k}=0,$ 
 $  \forall k\in \mathbb{N}_0,$
i.e.
$$\lim_{\textbf{w}\to 0} \frac{\widehat{h}(\textbf{w})}{|\textbf{w}|^k}=0, \enspace \forall k\in \mathbb{N}_0.$$
Thus $ h=DS_\psi ^{*} \Phi\in \mathcal{S}_0 (\mathbb{R}^n).$

We now prove the continuity of the bilinear mapping 
$DS^{*}$. Since $\varphi \to \widehat{\varphi}$ is an automorphism of the $\mathcal{S}$
spaces, the family 
$$\widehat{\rho}_v(\varphi)=\rho_v(\widehat{\varphi}), \enspace \varphi\in \mathcal{S} (\mathbb{R}), \ v\in \mathbb{N}_0,$$
is a base of seminorms for the topology of $\mathcal{S} (\mathbb{R})$. We will define different family of seminorms on $\mathcal{S}_0 (\mathbb{R}^n)$. The seminorms $$\dot{\rho}_{N,q,k}(\varphi):=\sup_{(\textbf{u},w)\in \mathbb{S}^{n-1} \times \mathbb{R}} \left| w^N \frac{\partial^q}{\partial w^q} \Delta_\textbf{u} ^{k} \widehat{\varphi}(w\textbf{u}) \right|, \enspace N,q,k \in \mathbb{N}_0, $$
are a base of continuous seminorms for the topology of $\mathcal{S}_0 (\mathbb{R}^n)$ (\cite{Kos13}, p. 10). So, for given $N,q,k \in \mathbb{N}_0$, by equations (\ref{***}) and (\ref{****}) we obtain
\begin{align*}
    &\dot{\rho}_{N,q,k}(h)=\sup_{(\textbf{u},w)\in \mathbb{S}^{n-1} \times \mathbb{R}} \left| w^N \frac{\partial^q}{\partial w^q} \Delta_\textbf{u} ^{k} \widehat{h}(w\textbf{u}) \right|\\ & \lesssim \sup_{(\textbf{u},w)\in \mathbb{S}^{n-1} \times \mathbb{R}} \left| w^N \frac{\partial^q}{\partial w^q} \Delta_\textbf{u} ^{k} G(\textbf{u},w) \right| \lesssim \rho_{r}(\widehat{\psi}) \rho_{v_1,v_2}^{q_1,q_2,k}(\Phi),
\end{align*}
for some $q_1,q_2,r,v_1,v_2 \in \mathbb{N}_0.$
\end{proof}
The next proposition is a direct consequence of Proposition 3.5, Theorem 4.1 and Theorem 4.2.

\begin{proposition}
Let $\psi\in \mathcal{S}_1 (\mathbb{R})$ be a non-trivial window and $\eta\in \mathcal{S}_1 (\mathbb{R})$ be a reconstruction window for it. Then
\begin{equation} \label{reconsutrctionformula}
\frac{1}{C_{\psi,\eta}}DS_{\eta}^{*} \circ DS_\psi= Id_{\mathcal{S}_0 (\mathbb{R}^n)}.
\end{equation} 
\end{proposition}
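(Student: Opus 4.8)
The plan is to establish the operator identity
\eqref{reconsutrctionformula} by combining the pointwise reconstruction
formula of Proposition~3.4 with the continuity results of the two preceding
theorems, and to use a density argument to justify passing from the class of
functions covered by Proposition~3.4 to all of $\mathcal{S}_0(\mathbb{R}^n)$.
First I would observe that for any fixed admissible pair $\psi,\eta\in
\mathcal{S}_1(\mathbb{R})$, Theorem~4.1 guarantees that $DS_\psi$ maps
$\mathcal{S}_0(\mathbb{R}^n)$ continuously into $\mathcal{S}(\mathbb{Y}^{n+1})$,
and Theorem~4.2 guarantees that $DS_\eta^{*}$ maps
$\mathcal{S}(\mathbb{Y}^{n+1})$ continuously back into
$\mathcal{S}_0(\mathbb{R}^n)$. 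Hence the composition
$\frac{1}{C_{\psi,\eta}}\,DS_\eta^{*}\circ DS_\psi$ is a well-defined
\emph{continuous} linear operator from $\mathcal{S}_0(\mathbb{R}^n)$ into
itself; the content of the proposition is that this operator is the identity.

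The key step is to verify the identity on a dense subclass where
Proposition~3.4 applies directly. For $f\in\mathcal{S}_0(\mathbb{R}^n)$ we have
in particular $f\in L^1(\mathbb{R}^n)$ with $\widehat f\in\mathcal{S}(\mathbb{R}^n)
\subset L^1(\mathbb{R}^n)$, so the hypotheses of the reconstruction formula are
met and
\begin{equation*}
\frac{1}{C_{\psi,\eta}}\int_{\mathbb{S}^{n-1}}\int_{\mathbb{R}^{\times}}
\int_{\mathbb{R}} DS_\psi f(\textbf{u},b,a)\,\eta_{\textbf{u},b,a}(\textbf{x})
\,|a|^{n-2}\,dbdad\textbf{u}=f(\textbf{x})
\end{equation*}
for almost every $\textbf{x}\in\mathbb{R}^n$. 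The left-hand integral is
precisely $\frac{1}{C_{\psi,\eta}}DS_\eta^{*}(DS_\psi f)(\textbf{x})$ by the
definition of the synthesis operator, so we obtain
$\frac{1}{C_{\psi,\eta}}DS_\eta^{*}(DS_\psi f)=f$ as elements of
$L^2(\mathbb{R}^n)$, hence as functions. Since by Theorems~4.1 and~4.2 both
$f$ and $\frac{1}{C_{\psi,\eta}}DS_\eta^{*}(DS_\psi f)$ lie in
$\mathcal{S}_0(\mathbb{R}^n)$, and they agree almost everywhere, they agree
everywhere by continuity; this already establishes
\eqref{reconsutrctionformula} on all of $\mathcal{S}_0(\mathbb{R}^n)$, since
every element of $\mathcal{S}_0(\mathbb{R}^n)$ satisfies the hypotheses of
Proposition~3.4.

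The main obstacle, and the point that must be handled with care, is the
interplay between the almost-everywhere conclusion of Proposition~3.4 and the
genuinely pointwise statement \eqref{reconsutrctionformula} as an operator
identity on the space of smooth test functions. The resolution is exactly the
role played by the continuity theorems: Theorem~4.2 ensures that
$DS_\eta^{*}(DS_\psi f)$ is an honest element of $\mathcal{S}_0(\mathbb{R}^n)$,
in particular a continuous function, so an equality valid almost everywhere
between two continuous functions upgrades to equality everywhere. One should
also note that the admissibility condition on $\eta$, namely that
$C_{\psi,\eta}$ is nonzero and finite and that $e^{ix}\eta(x)\in
\mathcal{S}_0(\mathbb{R})$ (so that $\eta\in\mathcal{S}_1(\mathbb{R})$), is what
simultaneously makes the normalizing constant meaningful and places $\eta$ in
the class for which Theorem~4.2 is stated; this is why the hypothesis
$\eta\in\mathcal{S}_1(\mathbb{R})$ appears in the proposition rather than merely
$\eta\in\mathcal{S}(\mathbb{R})$.
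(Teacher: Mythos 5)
Your argument is correct and is essentially the paper's own: the paper presents this proposition as a direct consequence of the reconstruction formula together with Theorems 4.1 and 4.2, which is precisely the combination you spell out (with the continuity theorems guaranteeing that $DS_\eta^{*}(DS_\psi f)\in\mathcal{S}_0(\mathbb{R}^n)$, so that the almost-everywhere equality upgrades to equality everywhere). One minor slip: the pointwise reconstruction formula you invoke is Proposition~3.5, not Proposition~3.4 (which is the extended Parseval identity), though the statement you quote and use is the correct one.
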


%%%%%%%%%%%%%%%%%%%%%%%%%%%%%%%%%%%%%%%%%%%%%%%%%%%%%%%%%%%%%%%%%

\section{Directional Stockwell transform on $\mathcal{S}_0'(\mathbb{R}^n)$}

We are ready to define the directional Stockwell transform of Lizorkin distributions.
\begin{definition} \label{5.3}
Let $\psi\in \mathcal{S}_1 (\mathbb{R})$. We define the directional Stockwell transform of $f\in \mathcal{S}_0' (\mathbb{R}^n)$ with respect to $\psi$ as the element $DS_\psi f \in \mathcal{S}'(\mathbb{Y}^{n+1})$ whose action on test functions is given by 
$$\langle DS_\psi f,\Phi\rangle:= \big\langle f, \overline{DS_\psi ^{*}\overline{\Phi}}\,\big\rangle, \enspace \Phi\in \mathcal{S}(\mathbb{Y}^{n+1}).$$
\end{definition}

\begin{definition} \label{5.4}
Let $\psi\in \mathcal{S}_1 (\mathbb{R})$. We define the directional Stockwell synthesis operator 
$DS_\psi ^{*}:\mathcal{S}'(\mathbb{Y}^{n+1}) \to \mathcal{S}_0' (\mathbb{R}^n)  $ as
$$\langle DS_\psi ^{*} F,\varphi \rangle:=\big\langle F,\overline{DS_\psi \overline{\varphi}} \,\big\rangle, \enspace F \in \mathcal{S}'(\mathbb{Y}^{n+1}), \enspace \varphi \in \mathcal{S}_0 (\mathbb{R}^n). $$
\end{definition}

The Definitions (\ref{5.3}) and (\ref{5.4}) are valid according to Theorem 4.2 and Theorem 4.1, respectively.

The following continuity result is obtained by taking transposes in Theorems 4.1 and Theorem 4.2. 

\begin{proposition}
Let $\psi \in \mathcal{S}_1 (\mathbb{R})$. The directional Stockwell transform $DS_{\psi}:\mathcal{S}_0' (\mathbb{R}^n) \to \mathcal{S}'(\mathbb{Y}^{n+1}) $ and the directional Stockwell synthesis operator $DS_\psi ^{*}:\mathcal{S}'(\mathbb{Y}^{n+1}) \to \mathcal{S}_0' (\mathbb{R}^n)  $ are continuous linear maps.
\end{proposition}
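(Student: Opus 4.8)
The plan is to deduce this proposition purely from the continuity of the two bilinear operators established in Theorems 4.1 and 4.2, together with the standard duality principle that the transpose of a continuous linear map between locally convex spaces is continuous for the strong dual topologies. For a fixed non-trivial window $\psi\in\mathcal{S}_1(\mathbb{R})$, Theorem 4.1 gives that the partial map $f\mapsto DS_\psi f$ is continuous from $\mathcal{S}_0(\mathbb{R}^n)$ into $\mathcal{S}(\mathbb{Y}^{n+1})$, and Theorem 4.2 gives that $\Phi\mapsto DS_\psi^{*}\Phi$ is continuous from $\mathcal{S}(\mathbb{Y}^{n+1})$ into $\mathcal{S}_0(\mathbb{R}^n)$. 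Since the distributional transforms in Definitions 5.3 and 5.4 were defined precisely as the transposes (up to the harmless conjugation) of these maps, their continuity is inherited automatically.

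More concretely, I would first recall the general fact (see \cite{Tre67}) that if $L:X\to Y$ is continuous and linear between locally convex spaces, then its transpose $L^{t}:Y'\to X'$ is continuous when both duals carry the strong topology. The conjugations appearing in the definitions, namely $\Phi\mapsto\overline{DS_\psi^{*}\overline{\Phi}}$ and $\varphi\mapsto\overline{DS_\psi\overline{\varphi}}$, are compositions of a continuous linear map with the conjugation $\Phi\mapsto\overline{\Phi}$, which is an antilinear topological isomorphism of each Schwartz-type space onto itself; hence they do not affect continuity. Thus $DS_\psi:\mathcal{S}_0'(\mathbb{R}^n)\to\mathcal{S}'(\mathbb{Y}^{n+1})$ is the transpose of the continuous map $\Phi\mapsto\overline{DS_\psi^{*}\overline{\Phi}}$ from Theorem 4.2, and $DS_\psi^{*}:\mathcal{S}'(\mathbb{Y}^{n+1})\to\mathcal{S}_0'(\mathbb{R}^n)$ is the transpose of the continuous map $\varphi\mapsto\overline{DS_\psi\overline{\varphi}}$ from Theorem 4.1.

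The only genuine point requiring care — and the step I expect to be the main (though mild) obstacle — is verifying that the maps being transposed are honest continuous linear operators between the \emph{test-function} spaces, so that the transposes land in the correct dual spaces. This amounts to checking that $DS_\psi^{*}$ really maps $\mathcal{S}(\mathbb{Y}^{n+1})$ continuously into $\mathcal{S}_0(\mathbb{R}^n)$ (so that its transpose acts on $\mathcal{S}_0'(\mathbb{R}^n)$) and that $DS_\psi$ maps $\mathcal{S}_0(\mathbb{R}^n)$ continuously into $\mathcal{S}(\mathbb{Y}^{n+1})$ (so that its transpose acts on $\mathcal{S}'(\mathbb{Y}^{n+1})$) — both of which are exactly the contents of Theorems 4.2 and 4.1 for the fixed window $\psi$. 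Since $\psi\in\mathcal{S}_1(\mathbb{R})$ is held fixed, the bilinear estimates of those theorems specialize to the required continuity of each partial linear map, and the well-definedness of Definitions 5.3 and 5.4 (already noted in the excerpt) guarantees the dual pairings make sense. I would therefore conclude by simply invoking the transpose principle, with no further computation needed.
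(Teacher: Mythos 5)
Your proposal is correct and takes essentially the same route as the paper, which states that the proposition ``is obtained by taking transposes in Theorems 4.1 and Theorem 4.2'' and gives no further detail. Your elaboration---fixing $\psi$ to specialize the bilinear continuity results, noting that conjugation $\Phi\mapsto\overline{\Phi}$ is a continuous antilinear involution so the maps being transposed are honest continuous linear operators, and invoking the standard fact that transposes are continuous for the strong dual topologies (consistent with the paper's convention that all duals carry the strong topology)---simply makes explicit what the paper leaves implicit, with no divergence in method.
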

We can also generalize the reconstruction formula (\ref{reconsutrctionformula}) to the space of Lizorkin distributions.
\begin{proposition}
Let $\psi\in \mathcal{S}_1 (\mathbb{R})$ be a non-trivial window and $\eta \in \mathcal{S}_1 (\mathbb{R})$ be a reconstruction window for it. Then 
$$\frac{1}{C_{\psi,\eta}}DS_{\eta}^{*} \circ DS_\psi= Id_{\mathcal{S}_0'(\mathbb{R}^n)}.$$
\end{proposition}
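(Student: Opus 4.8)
The plan is to obtain the distributional reconstruction formula by transposing the function-level identity in Proposition 4.3, namely $\frac{1}{C_{\psi,\eta}} DS_{\eta}^{*} \circ DS_{\psi} = Id_{\mathcal{S}_0(\mathbb{R}^n)}$. Since $DS_{\psi}: \mathcal{S}_0'(\mathbb{R}^n) \to \mathcal{S}'(\mathbb{Y}^{n+1})$ and $DS_{\eta}^{*}: \mathcal{S}'(\mathbb{Y}^{n+1}) \to \mathcal{S}_0'(\mathbb{R}^n)$ were defined in Definitions 5.3 and 5.4 precisely as the transposes (with a conjugation built in) of the test-function operators, the natural strategy is to unwind these definitions against an arbitrary test function $\varphi \in \mathcal{S}_0(\mathbb{R}^n)$ and show the result reduces to an application of Proposition 4.3.

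First I would fix $f \in \mathcal{S}_0'(\mathbb{R}^n)$ and $\varphi \in \mathcal{S}_0(\mathbb{R}^n)$, and compute the dual pairing $\big\langle \frac{1}{C_{\psi,\eta}} DS_{\eta}^{*}(DS_{\psi} f), \varphi \big\rangle$. Applying Definition 5.4 to the outer operator $DS_{\eta}^{*}$ converts this into $\frac{1}{C_{\psi,\eta}} \big\langle DS_{\psi} f, \overline{DS_{\eta} \overline{\varphi}} \big\rangle$, where now the pairing is over $\mathbb{Y}^{n+1}$. Applying Definition 5.3 to $DS_{\psi} f$ with the test function $\Phi = \overline{DS_{\eta}\overline{\varphi}}$ then transfers everything back to a pairing over $\mathbb{R}^n$, yielding $\frac{1}{C_{\psi,\eta}} \big\langle f, \overline{DS_{\psi}^{*} \overline{(\overline{DS_{\eta}\overline{\varphi}})}} \big\rangle$. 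The conjugations should collapse: $\overline{(\overline{DS_{\eta}\overline{\varphi}})} = DS_{\eta}\overline{\varphi}$, so the inner test function becomes $\overline{DS_{\psi}^{*}(DS_{\eta}\overline{\varphi})}$.

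The key step is then to recognize the appearance of the composition $DS_{\psi}^{*} \circ DS_{\eta}$ acting on $\overline{\varphi}$. By Proposition 4.3 (applied with the roles of $\psi$ and $\eta$ in the stated order, using that $\eta$ is a reconstruction window for $\psi$ and that $C_{\psi,\eta}$ is symmetric up to conjugation in its defining integral \eqref{5}), we have $DS_{\psi}^{*}(DS_{\eta}\overline{\varphi}) = C_{\psi,\eta}\,\overline{\varphi}$ on $\mathcal{S}_0(\mathbb{R}^n)$. Substituting this gives $\frac{1}{C_{\psi,\eta}} \big\langle f, \overline{C_{\psi,\eta}\,\overline{\varphi}} \big\rangle = \big\langle f, \varphi \big\rangle$, which is exactly $\langle Id_{\mathcal{S}_0'(\mathbb{R}^n)} f, \varphi \rangle$. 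Since $\varphi$ was arbitrary, this establishes the operator identity.

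The main obstacle to watch for is the careful bookkeeping of the complex conjugations and of the order in which $\psi$ and $\eta$ appear: Definitions 5.3 and 5.4 each insert a conjugation, and Proposition 4.3 is stated as $\frac{1}{C_{\psi,\eta}} DS_{\eta}^{*} \circ DS_{\psi} = Id$ rather than $DS_{\psi}^{*} \circ DS_{\eta}$, so I would verify that $\eta$ being a reconstruction window for $\psi$ is equivalent to $\psi$ being one for $\eta$ (this holds because \eqref{5} shows $C_{\eta,\psi} = \overline{C_{\psi,\eta}}$, which is nonzero and finite precisely when $C_{\psi,\eta}$ is), ensuring Proposition 4.3 may be invoked in the needed orientation. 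All other steps are formal manipulations justified by the continuity guaranteed in Proposition 5.1, so no convergence issues arise beyond those already handled at the test-function level.
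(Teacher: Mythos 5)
Your proposal follows exactly the paper's proof: unwind Definitions 5.3 and 5.4 against a test function $\varphi \in \mathcal{S}_0(\mathbb{R}^n)$, reduce the pairing to the inner composition $DS_\psi^{*}(DS_\eta\overline{\varphi})$, and invoke Proposition 4.3 with the roles of the two windows exchanged, using the symmetry $C_{\eta,\psi}=\overline{C_{\psi,\eta}}$. There is, however, one conjugation slip in your displayed identity: with $\eta$ as the analysis window and $\psi$ as its reconstruction window, Proposition 4.3 yields $DS_\psi^{*}(DS_\eta\overline{\varphi}) = C_{\eta,\psi}\,\overline{\varphi} = \overline{C_{\psi,\eta}}\,\overline{\varphi}$, not $C_{\psi,\eta}\,\overline{\varphi}$ as you wrote --- these differ whenever $C_{\psi,\eta}$ is not real, and nothing forces it to be real for complex windows. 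Your next step then contains a second, compensating slip: you treat $\overline{C_{\psi,\eta}\,\overline{\varphi}}$ as if it were $C_{\psi,\eta}\,\varphi$, whereas it equals $\overline{C_{\psi,\eta}}\,\varphi$. With both corrected, the chain reads, precisely as in the paper's proof,
$$\Big\langle f, \overline{DS_\psi^{*}(DS_\eta\overline{\varphi})}\Big\rangle = \Big\langle f, \overline{C_{\eta,\psi}\,\overline{\varphi}}\Big\rangle = \overline{C_{\eta,\psi}}\,\langle f,\varphi\rangle = C_{\psi,\eta}\,\langle f,\varphi\rangle,$$
and division by $C_{\psi,\eta}$ gives the claim. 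Since you yourself recorded $C_{\eta,\psi}=\overline{C_{\psi,\eta}}$ when verifying that $\psi$ is a reconstruction window for $\eta$, the repair is immediate: your two errors cancel, the conclusion is correct, and the structure of your argument is the same as the paper's.
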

\begin{proof}
Let $f\in \mathcal{S}_0'(\mathbb{R}^n)$ and $\varphi \in \mathcal{S}_0(\mathbb{R}^n)$. Using Definitions \ref{5.3} and \ref{5.4} and Proposition 4.3, we obtain 
$$\langle DS_{\eta}^{*} \circ DS_\psi f,\varphi \rangle=\big\langle DS_\psi f,\overline{DS_{\eta} \overline{\varphi}} \,\big\rangle=\big\langle f,\overline{DS_\psi ^{*}(DS_{\eta}\overline{\varphi})} \,\big\rangle=\langle f, \overline{C_{\eta,\psi}\overline{\varphi}}\rangle=C_{\psi,\eta}\langle f,\varphi \rangle.$$
\end{proof}

%%%%%%%%%%%%%%%%%%%%%%%%%%%%%%%%%%%%%%%%%%%%%%%%%%%%%%%%%%%

\section{Directional Stockwell transform on $D_{L^1}'(\mathbb{R}^n)$}

Since, $\psi_{\textbf{u},b,a}\in D_{L^{\infty}}(\mathbb{R}^n) $, Definition 3.1 also works for $f\in D_{L^1}'(\mathbb{R}^n) $. In this case, by (\ref{dlp}) and (\ref{def_DST}) we have 
\begin{equation} \label{8'}
DS_\psi f(\textbf{u},b,a):= \sum_{j=1}^{N} \langle \partial^{\alpha_j} f_{j}(\textbf{x}), \overline{\psi_{\textbf{u},b,a}(\textbf{x})} \rangle_{\textbf{x}}= \sum_{j=1}^{N} (-1)^{|\alpha_j|} \int_{\mathbb{R}^n} f_j(\textbf{x}) \overline{\frac{\partial^{|\alpha_j|}}{\partial \textbf{x}^{\alpha_j}} \psi_{\textbf{u},b,a}(\textbf{x})} d\textbf{x},
\end{equation}
where $f_j \in L^1(\mathbb{R}^n)$.
 
Since the space $D_{L^1}'(\mathbb{R}^n)$ contains the space of compactly supported distributions $\mathcal{E}'(\mathbb{R}^n)$ 
and the space of convolutors $\mathcal{O}'_C(\mathbb{R}^n)$,  Definition 3.1 also makes sense for $f\in \mathcal{E}'(\mathbb{R}^n) $ or $f\in\mathcal{O}'_C(\mathbb{R}^n)$. On the other hand, Definition 3.1 does not work for $f\in \mathcal{S}'(\mathbb{R}^n) $ since $\psi_{\textbf{u},b,a} \notin \mathcal{S}(\mathbb{R}^n)$.

In the next Proposition we show that Definition 3.1 coincides with Definition 5.1 under our convention (\ref{standardidentification}) for identifying functions with distributions on $\mathbb{Y}^{n+1}$.

\begin{proposition}
Let $f\in D_{L^1}'(\mathbb{R}^n) $ and $\psi\in \mathcal{S}_1(\mathbb{R})$. The directional Stockwell transform of $f$ is given by the function (\ref{def_DST}), i.e.
$$\langle DS_\psi f, \Phi \rangle= \int_{\mathbb{S}^{n-1}}\int_{\mathbb{R}^{\times}} \int_{\mathbb{R}} DS_\psi f({\normalfont\textbf{u}},b,a) \Phi(\normalfont\textbf{u},b,a) |a|^{n-2} dbdad\normalfont\textbf{u}, \enspace \Phi \in  \mathcal{S}(\mathbb{Y}^{n+1}).$$
\end{proposition}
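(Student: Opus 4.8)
The plan is to unfold the distributional definition onto the $L^1$ representatives of $f$ and then transport the computation back to $\mathbb{Y}^{n+1}$ via Fubini's theorem. By Definition \ref{5.3}, $\langle DS_\psi f, \Phi\rangle = \langle f, \overline{DS_\psi^*\overline{\Phi}}\rangle$, and this is legitimate because $\overline{\Phi}\in\mathcal{S}(\mathbb{Y}^{n+1})$ and $\psi\in\mathcal{S}_1(\mathbb{R})$ force $DS_\psi^*\overline{\Phi}\in\mathcal{S}_0(\mathbb{R}^n)\subset D_{L^\infty}(\mathbb{R}^n)$ by Theorem 4.2, so it is an admissible test function for $f\in D_{L^1}'(\mathbb{R}^n)$. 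Writing $f=\sum_{j=1}^N\partial^{\alpha_j}f_j$ with $f_j\in L^1(\mathbb{R}^n)$ as in (\ref{dlp}) and using the integration-by-parts form of the $D_{L^1}'$-pairing, I would obtain
$$\langle DS_\psi f, \Phi\rangle = \sum_{j=1}^N (-1)^{|\alpha_j|} \int_{\mathbb{R}^n} f_j(\textbf{x})\, \partial_{\textbf{x}}^{\alpha_j}\overline{DS_\psi^*\overline{\Phi}}(\textbf{x})\, d\textbf{x}.$$

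Next I would insert the explicit integral representation $\overline{DS_\psi^*\overline{\Phi}}(\textbf{x})=\int_{\mathbb{S}^{n-1}}\int_{\mathbb{R}^{\times}}\int_{\mathbb{R}}\Phi(\textbf{u},b,a)\,\overline{\psi_{\textbf{u},b,a}(\textbf{x})}\,|a|^{n-2}\,db\,da\,d\textbf{u}$ and differentiate under the integral sign, so that $\partial_{\textbf{x}}^{\alpha_j}$ falls on $\overline{\psi_{\textbf{u},b,a}}$. Applying Fubini to interchange the $\textbf{x}$-integration with the integration over $\mathbb{Y}^{n+1}$, the inner $\textbf{x}$-integral becomes $\sum_{j=1}^N(-1)^{|\alpha_j|}\int_{\mathbb{R}^n}f_j(\textbf{x})\,\overline{\partial_{\textbf{x}}^{\alpha_j}\psi_{\textbf{u},b,a}(\textbf{x})}\,d\textbf{x}$, which is exactly the pointwise value $DS_\psi f(\textbf{u},b,a)$ recorded in (\ref{8'}). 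This leaves precisely the right-hand side under the standard identification (\ref{standardidentification}), which concludes the argument.

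The only substantive point — and the main obstacle — is the simultaneous justification of differentiating under the integral sign and of Fubini's theorem; both reduce to absolute convergence of
$$\int_{\mathbb{R}^n}\int_{\mathbb{S}^{n-1}}\int_{\mathbb{R}^{\times}}\int_{\mathbb{R}} |f_j(\textbf{x})|\,|\Phi(\textbf{u},b,a)|\,|\partial_{\textbf{x}}^{\alpha_j}\psi_{\textbf{u},b,a}(\textbf{x})|\,|a|^{n-2}\,db\,da\,d\textbf{u}\,d\textbf{x}.$$
I would estimate the analysing family by $|\partial_{\textbf{x}}^{\alpha_j}\psi_{\textbf{u},b,a}(\textbf{x})|\lesssim |a|^{1+|\alpha_j|}\max_{k\le|\alpha_j|}|\psi^{(k)}(a(\textbf{x}\cdot\textbf{u}-b))|$, where $|\textbf{u}|=1$ absorbs the spherical factors produced by the chain rule. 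The substitution $t=a(\textbf{x}\cdot\textbf{u}-b)$ integrates out $b$ and gives $\|\psi^{(k)}\|_{L^1}/|a|$; crucially I would bound $\Phi$ here only through $|\Phi(\textbf{u},b,a)|\lesssim(|a|^s+|a|^{-s})^{-1}$, discarding its $b$-decay, precisely so that the estimate remains uniform in $\textbf{x}$ after the shift.

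What then remains is the scalar scale integral
$$\int_{\mathbb{R}^{\times}} \frac{|a|^{|\alpha_j|+n-2}}{|a|^s+|a|^{-s}}\,da$$
together with the finite integration over the compact sphere $\mathbb{S}^{n-1}$. Since $n\ge 2$, the integrand behaves like $|a|^{|\alpha_j|+n-2+s}$ as $a\to 0$ and like $|a|^{|\alpha_j|+n-2-s}$ as $|a|\to\infty$, so taking $s>|\alpha_j|+n-1$ — admissible since $\Phi\in\mathcal{S}(\mathbb{Y}^{n+1})$ controls every such seminorm — renders it finite. The resulting bound on the $\mathbb{Y}^{n+1}$-integral is uniform in $\textbf{x}$, and multiplying by $\|f_j\|_{L^1}<\infty$ yields convergence of the full quadruple integral; summing over the finitely many $j$ completes the justification, everything else being bookkeeping with the definitions.
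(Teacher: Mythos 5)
Your proof is correct, and it follows the same broad strategy as the paper (unfold Definition 5.1 via duality, decompose $f=\sum_j\partial^{\alpha_j}f_j$, integrate by parts, push $\partial^{\alpha_j}$ onto $\psi_{\textbf{u},b,a}$, interchange integrals), but the bookkeeping differs in a way worth noting. The paper organizes the Leibniz expansion into a structural intertwining identity,
$$DS_\psi(\partial^{\alpha_j}f_j)=(-1)^{|\alpha_j|}\sum_{k\leq\alpha_j}\binom{\alpha_j}{k}(-i)^{|\alpha_j-k|}\,(a\textbf{u})^{\alpha_j}\,DS_{\psi^{(|k|)}}f_j,$$
which reduces the whole statement to the case $f\in L^1(\mathbb{R}^n)$, where it simply quotes the transpose identity of Proposition 3.6; the exchange formula relating derivatives of $f$ to derivatives of the window and powers of $(a\textbf{u})$ is then a by-product of independent interest. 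You instead skip the intertwining formula entirely and perform one global Fubini on the quadruple integral, with the derivative already sitting on $\overline{\psi_{\textbf{u},b,a}}$, so the inner $\textbf{x}$-integral directly reproduces the pointwise formula (6.1). What your route buys is an explicit domination estimate: the bound $|\partial^{\alpha_j}_{\textbf{x}}\psi_{\textbf{u},b,a}(\textbf{x})|\lesssim|a|^{1+|\alpha_j|}\max_{k\leq|\alpha_j|}|\psi^{(k)}(a(\textbf{x}\cdot\textbf{u}-b))|$, integrating out $b$ at cost $1/|a|$, and controlling $\Phi$ only through $(|a|^s+|a|^{-s})^{-1}$ with $s>|\alpha_j|+n-1$ is exactly right, and it supplies the absolute-convergence justification that the paper leaves implicit (both for its differentiation under the integral sign in (6.3) and for the Fubini inside Proposition 3.6, where absolute convergence of $DS^*_\psi\Phi$ is merely asserted in Section 3). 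In that sense your version is more self-contained, at the price of not isolating the exchange formula as a reusable identity.
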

\begin{proof}
Since $f\in D_{L^1}'(\mathbb{R}^n) $, by (\ref{dlp}) we have that $f=\sum_{j=1}^{N} \partial^{\alpha_j} f_{j}$, where $f_j \in L^1(\mathbb{R}^n)$
for some $N\in \mathbb{N}, \alpha_j \in \mathbb{N}_0 ^{n}$. Now for $\Phi \in \mathcal{S}(\mathbb{Y}^{n+1})$, we obtain
\begin{equation} \label{8}
\langle DS_\psi(\partial^{\alpha_j}f_j) , \Phi \rangle:=\big\langle \partial^{\alpha_j}f_j,\overline{DS_\psi ^{*}\overline{\Phi}} \, \big\rangle = (-1)^{|\alpha_j|} \Big \langle f_j (\textbf{x}), \frac{\partial^{|\alpha_j|}}{\partial \textbf{x}^{\alpha_j}}\overline{DS_\psi ^{*}\overline{\Phi}(\textbf{x})} \,\Big \rangle.
\end{equation}
On the other hand,
\begin{align*} 
\frac{\partial^{|\alpha_j|}}{\partial \textbf{x}^{\alpha_j}}\overline{DS_\psi ^{*}\overline{\Phi}(\textbf{x})}&=\frac{\partial^{|\alpha_j|}}{\partial \textbf{x}^{\alpha_j}} \overline{\int_{\mathbb{S}^{n-1}}\int_{\mathbb{R}^{\times}} \int_{\mathbb{R}} \overline{\Phi}(\textbf{u},b,a) \psi_{\textbf{u},b,a}(\textbf{x}) |a|^{n-2} dbdad\textbf{u}}
\end{align*}

\begin{align}\label{9}
&\nonumber =\int_{\mathbb{S}^{n-1}}\int_{\mathbb{R}^{\times}} \int_{\mathbb{R}} {\Phi}(\textbf{u},b,a) \frac{|a|}{(2\pi)^{n/2}} \frac{\partial^{|\alpha_j|}}{\partial \textbf{x}^{\alpha_j}}\Big (\overline{\psi}\big(a(\textbf{x} \cdot \textbf{u}-b)\big) e^{-ia(\textbf{x} \cdot \textbf{u})}\Big ) |a|^{n-2} dbdad\textbf{u}\\
&\nonumber =\sum_{k \leq \alpha_j} \binom{\alpha_j}{k} (-i)^{|\alpha_j-k|} \overline{\int_{\mathbb{S}^{n-1}}\int_{\mathbb{R}^{\times}} \int_{\mathbb{R}} (a\textbf{u})^{\alpha_j}\overline{\Phi}(\textbf{u},b,a) \psi_{\textbf{u},b,a}^{(|k|)}(\textbf{x}) |a|^{n-2} dbdad\textbf{u}}\\
&=\sum_{k \leq \alpha_j} \binom{\alpha_j}{k} (-i)^{|\alpha_j-k|} \overline{DS_{\psi^{(|k|)}} ^{*}((a\textbf{u})^{\alpha_j}\overline{\Phi})(\textbf{x})}.
\end{align}

By relations (\ref{8}) and (\ref{9}), we obtain

\begin{align*}
\langle DS_\psi(\partial^{\alpha_j}f_j), \Phi \rangle&=(-1)^{|\alpha_j|} \sum_{k \leq \alpha_j} \binom{\alpha_j}{k} (-i)^{|\alpha_j-k|}\Big \langle f_j (\textbf{x}), \overline{DS_{\psi^{(|k|)}} ^{*}((a\textbf{u})^{\alpha_j}\overline{\Phi})(\textbf{x})}\,\Big\rangle\\
&=(-1)^{|\alpha_j|} \sum_{k \leq \alpha_j} \binom{\alpha_j}{k} (-i)^{|\alpha_j-k|} \big\langle DS_{\psi^{(|k|)}} f_j, (a\textbf{u})^{\alpha_j}\Phi \big\rangle.
\end{align*}

The last relation implies 
$$DS_\psi(\partial^{\alpha_j}f_j)=(-1)^{|\alpha_j|} \sum_{k \leq \alpha_j} \binom{\alpha_j}{k} (-i)^{|\alpha_j-k|} (a\textbf{u})^{\alpha_j} DS_{\psi^{(|k|)}} f_j.$$
Therefore, we can suppose that $f\in L^1(\mathbb{R}^n)$.\\ 
Now, under this assumption, Proposition 3.6 and the standard identification (\ref{standardidentification}), we have 

\begin{align*}
&\quad\,\langle DS_\psi f, \Phi \rangle= \big\langle f, \overline{DS_\psi^{*}\overline{\Phi}}\,\big \rangle= \int_{\mathbb{R}^n} f(\textbf{x}) \overline{DS_\psi^{*}\overline{\Phi}(\textbf{x})} d\textbf{x}\\
&=\int_{\mathbb{S}^{n-1}}\int_{\mathbb{R}^{\times}} \int_{\mathbb{R}} DS_\psi f(\textbf{u},b,a) \Phi (\textbf{u},b,a) |a|^{n-2} dbdad\textbf{u}=\langle DS_\psi f(\textbf{u},b,a), \Phi(\textbf{u},b,a)\rangle.
\end{align*}
\end{proof}

%%%%%%%%%%%%%%%%%%%%%%%%%%%%%%%%%%%%%

 %%%%%%%%%%%%%%%%%%%%%%%%%%%%%%%

\end{document}